\title{Finite-pool queues with heavy-tailed services}
\date{\today}
\author{Gianmarco Bet, Remco van der Hofstad, and Johan S.H. van Leeuwaarden} 
\DeclareRobustCommand{\VAN}[3]{#2} %for correct van der Hofstad citation
\begin{document}
\maketitle
\begin{abstract}
We consider the $\Delta_{(i)}/G/1$ queue, in which a a total of $n$ customers independently demand service after an exponential time. We focus on the case of \emph{heavy-tailed} service times, and assume that the tail of the service time distribution decays like $x^{-\alpha}$, with $\alpha\in(1,2)$. We consider the asymptotic regime in which the population size grows to infinity and establish that the scaled queue length process converges  to an $\alpha$-stable process with a negative quadratic drift. We leverage this asymptotic result to characterize the headstart that is needed to create a long period of activity. This result should be contrasted with the case of light-tailed service times, which was shown to have a similar scaling limit \cite{bet2014heavy}, but then with a Brownian motion instead of an $\alpha$-stable process.
\end{abstract}
\section{Introduction}
This paper deals with the $\DG$ queue, designed to model a service system to which only a finite pool of customers can arrive. The $n$ potential customers in the pool each have an i.i.d.~exponential clock and join the queue when their clock rings. Each customer joins the queue only once, and at the system level, this creates an arrival process governed by the order statistics of the clock times. The $\DG$ queue, in the precise form as studied in this paper, was introduced in \cite{honnappa2015delta, honnappa2014transitory}, and belongs to the branch of queueing theory that deals with time-dependent or transient conditions \cite{mandelbaum1995strong, massey1982non, massey1985asymptotic, newell1968queuesIII}. Indeed, with time, the pool of potential customers (those who have not joined yet) becomes smaller, and the influx of customers loses intensity. 

The $\DG$ queue can be studied in many operating regimes; see e.g.~\cite{honnappa2015delta, louchard1994large}, where the focus lies on overloaded regimes and see \cite{bet2014heavy} for a detailed overview.  In \cite{bet2014heavy} we introduced a new heavy-traffic regime defined by two features: The customer pool grows to infinity and the initial (at time zero) rate of newly arriving customers is such that, on average, one new customer is expected to arrive during one service time. This gives rise to a large-scale system that (initially) operates close to full utilization, and is expected to utilize its resources efficiently. By this we mean that the server is typically busy, and that idle times are negligible. In fact, that is the main goal of this paper: To characterize the conditions under which sufficiently many customers will join the queue to guarantee that the system will have a substantial backlog of customers. We therefore focus on the first busy period, and show how to set the initial number of customers already present in the queue at time $t=0$, referred to as the \emph{head start}, to create a considerable first busy period during which the server can work continuously.

In \cite{bet2014heavy} we have studied this heavy-traffic regime under the assumption that the second moment of the service time distribution is finite, and showed that the queue length process converges to a reflected Brownian motion with negative quadratic drift. The negative drift captures the effect of a  pool of potential customers that diminishes with time: After some time, the activity in the queue inevitably becomes negligible. However, in the early phases the rate of arriving customers can be high. Say you want to start a business and you estimate that a population of $n$ persons might become customers. Then the head start can be interpreted as the persons that signed up (already) as a customer. In \cite{bet2014heavy} we showed that in our heavy-traffic regime, once the head start is of order $n^{1/3}$, and you would decide to start your business, the number of customers you will serve consecutively is of the order $n^{2/3}$. With this mental picture, our heavy-traffic regime gives insight into dimensioning rules about how large the pool $n$ should be in comparison to the head start, how to choose the service capacity as a function of the pool size to achieve full system utilization, and how to control the first busy period, which essentially is the relevant time of operation of the system.

In the present paper we drop the finite second moment condition and study the queue length process under the additional assumption that the service times are {\it heavy tailed}. More precisely, we assume that the service times follow a power-law distribution with power-law exponent $\alpha\in(1,2)$. We establish that in a similar heavy-traffic regime as in \cite{bet2014heavy} the rescaled queue length process converges to an $\alpha$-stable process with negative quadratic drift. As in the finite second moment case, the diminishing pool effect is still there in the form of the drift term, but the oscillations of the limiting queue length are much wilder. We will also show that, as a consequence of the larger fluctuations, the desired head start and canonical busy period should scale with $n$, in a specific way that vitally depends on the exponent $\alpha$ and other more refined properties of the service time distribution.

\section{Description of the model}
We now describe in detail the $\Delta_{(i)}/G/1$ model with exponential arrivals. We consider a population of $n$ potential customers that are to be served by a single server. Each customer $i\in\{1,\ldots,n\}=: [n]$ is assigned a random variable $T_i$, representing its arrival time. We assume $(T_i)_{i=1}^n$ to be a sequence of i.i.d.~exponential random variables with mean $1/\lambda$. We denote the distribution function of  $T_1$ by $\FT(\cdot)$. When the clock $T_i$ rings, customer $i$ joins the queue and customers in the queue are served in a first-come-first-served manner. The arrival times are then given by the order statistics of $(T_i)_{i=1}^n$. The service times are given by a sequence of independent random variables $(S_i)_{i=1}^n$. We denote the distribution function of  $S_1$ by $F_{\sss S}(\cdot)$. Recall that a function $\ell(\cdot)$ is said to be \emph{slowly varying} when $\lim_{t\rightarrow\infty}\ell(tc)/\ell(t) = 1$ for all $c>0$.  The service time distribution is assumed to be in the domain of attraction of an $\alpha$-stable law i.e., its tail decays as
\begin{equation}\label{eq:definition_alpha_stable_service_distribution}%
\mathbb P(S > t) = 1 - F_{\sss S} (t) =  t^{-\alpha}\ell (t),\qquad \alpha\in(1,2),
\end{equation}%
for a slowly-varying function $\ell(\cdot)$. Assumption \eqref{eq:definition_alpha_stable_service_distribution} implies, in particular, that $\mathbb{E}[S^{k}] = \infty$ for $k > \alpha$, and $\mathbb E[S^k]< \infty$ for $k < \alpha$. We further assume that the queue obeys the heavy-traffic condition
\begin{equation}%
\max_{t\geq0}\fT(t)\mathbb E[S] = 1,
\end{equation}%
where $\fT(\cdot)$ is the density of the arrival time distribution. For  exponential arrival times with rate $\lambda$, this condition simplifies to 
\begin{equation}\label{eq:definition_criticality_condition}%
\lambda\mathbb E[S] = 1,
\end{equation}%
which can be interpreted as follows. Since $\lambda$ represents the instantaneous (close to time $t = 0$) arrival rate of customers, \eqref{eq:definition_criticality_condition} amounts to assuming that on average, during one service time, one customer joins the queue.
We study the queue after $ N_n(0)$ customers have already joined, where $N_n(0)$ may depend on $n$. Without loss of generality, we can assume there are (still) $n$ customers in the pool. Before stating our main results, let us introduce some notation. 
%We are then interested in the first hitting time of zero of the queue length process. To prove the convergence of this, we first show that the queue length process up to the first hitting time of zero converges in distribution to a certain stable motion with negative drift, and then apply continuity arguments to certain functionals of the sample paths of the limiting process.

Denote the number of arrived customers in the interval $[0,t]$ by
\begin{equation}\label{eq:delta_arrival_process}%
A_n(t) = \sum_{i=1}^n \mathds 1_{\{T_i\leq t\}}.
\end{equation}%
Let 
\begin{equation}%
\sigma (t) = \max\Big\{k\geq0\Big\vert \sum_{i=1}^k S_i \leq t\Big\}
\end{equation}%
be the renewal process associated with the service times $(S_i)_{i=1}^n$ and define the net input process as 
\begin{equation}\label{eq:net_input_process}%
X_n(t) = \sum_{i=1}^{A_n(t)}S_i -t.
\end{equation}%
The process $X_n(\cdot)$ is useful in defining the cumulative busy time process as
\begin{equation}%
B_n(t) = t - I_n(t) = t - \inf_{0\leq s\leq t}(X_n(s)^-),
\end{equation}%
where $f(x)^-=\min\{0,f(x)\}$  (resp.~$f(x)^+=\max\{0,f(x)\}$) and $I_n(\cdot)$ is the cumulative idle time.

Let $\mathcal D := \mathcal D ([0,\infty))$ denote the space of c\`adl\`ag functions that  are continuous from the right and admit a limit from the left at every point. All the functions that we consider are  elements of $\mathcal D$. Let $\phi(\cdot):\mathcal D\mapsto\mathcal D$ be the \emph{reflection mapping}, defined as
\begin{equation}\label{eq:definition_reflection_mapping}%
\phi(f)(x) = f(x)+\psi(f)(x), 
\end{equation}%
where $\psi(\cdot):\mathcal D\mapsto\mathcal D$ is given by
\begin{equation}\label{eq:definition_local_time}%
\psi(f)(x) = - \inf_{0\leq y\leq x}(f(y)^-).
\end{equation}%
We define the queue length process $Q_n(t)$ by
\begin{equation}\label{eq:real_definition_queue_length}%
Q_n(t) = N_n(0) +  A_n(t) -  \sigma(B_n(t)),
\end{equation}%
where $N_n(0)$ denotes the number of customers already in the queue at the beginning of the first service. 
The time change $t\mapsto B_n(t)$ depends both on $(T_i)_{i=1}^{\infty}$ and $(S_i)_{i=1}^{\infty}$ and as such makes the analysis of $Q_n(t)$ challenging. One popular approach, pioneered by Iglehart and Whitt \cite{iglehart1970multiple}, consists in studying a related queue in which the server never idles, but rather continues working according to the renewal process associated with $(S_i)_{i=1}^{\infty}$ even when the queue is empty. This is often referred to as the queue with \emph{autonomous service} or the Borovkov modified system, see \cite[Chapter 10.2]{StochasticProcess}. It turns out that, under mild assumptions, the original queue and the Borovkov modified system are asymptotically equivalent in heavy traffic \cite[Theorem 10.2.2]{StochasticProcess}, in the sense that the distance between the two queue length processes converges to zero. However, for this approach to work, the service time limit process needs to be continuous. Indeed, the distance between the two processes is bounded from above by the (scaled) maximum service time, or equivalently the maximum jump functional applied to the service time process. If the service time limit process is continuous, the maximum jump functional converges to zero. If, on the other hand, the service time limit process is discontinuous, the distance between the two queues cannot be shown to converge to zero.

Instead, here we adopt a different approach that allows us to deal with a discontinuous service time limit process. This consists in expressing $Q_n(\cdot)$ as the reflection of an appropriate free process $N_n(\cdot)$. Since, after rescaling, $N_n(\cdot)$ converges and the reflection mapping is continuous a.s.~in the limit point, by the Continuous Mapping Theorem the process $Q_n(\cdot)$ also converges. The free process $N_n(\cdot)$ has the following interpretation: When the server is working, $N_n(\cdot)$ follows $Q_n(\cdot)$. When the queue is empty, $N_n(\cdot)$ decreases linearly at a rate equal to the service rate. Therefore, while in the Borovkov modified system the server works continuously according to the service time renewal process, in the  process $N_n(\cdot)$ when there are no customers in the system, the server provides instantaneous work with rate $1/\E[S]$. Consequently, the process $N_n(\cdot)$ can be seen as a fluid version of the Borovkov modified system. In Figure \ref{fig:free_process} we plot a sample path of the process $N_n(\cdot)$. The process $Q_n(\cdot)$ can then be represented as follows:
\begin{lemma}[Alternative formulation for the queue length]%
The queue length process $(Q_n(t))_{t\geq0}$ can be represented as
\begin{equation}\label{eq:definition_queue_length}%
Q_n(t) = \phi (N_n)(t),\qquad t\geq0,
\end{equation}%
where  $N_n(\cdot)$ is given by
\begin{equation}\label{eq:definition_pre_reflection_process}%
N_n(t) = N_n(0) +  A_n(t) -  \sigma(B_n(t)) - (t-B_n(t))/\E[S].
\end{equation}%
\end{lemma}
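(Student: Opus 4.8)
The plan is to identify the pair $(Q_n,\,I_n/\E[S])$ as the unique solution of the Skorokhod reflection problem driven by the free process $N_n$, and then conclude by uniqueness. Recall (see, e.g., \cite[Chapter 13]{StochasticProcess}) that for any $f\in\mathcal D$ with $f(0)\ge0$ the pair $\bigl(\phi(f),\psi(f)\bigr)$ from \eqref{eq:definition_reflection_mapping}--\eqref{eq:definition_local_time} is the \emph{only} pair $(g,h)$ of elements of $\mathcal D$ for which: (i) $g=f+h$; (ii) $g\ge0$; (iii) $h$ is non-decreasing with $h(0)=0$; and (iv) $h$ increases only at times $t$ with $g(t)=0$, i.e.\ $\int_0^\infty\mathds 1_{\{g(s)>0\}}\,\mathrm{d}h(s)=0$. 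So it suffices to check that $g:=Q_n$ and $h:=I_n/\E[S]$ verify (i)--(iv) with $f:=N_n$; this then yields $Q_n=\phi(N_n)$ and, as a by-product, $\psi(N_n)=I_n/\E[S]$.

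Properties (i)--(iii) are essentially bookkeeping. Property (i) is exactly the definition \eqref{eq:definition_pre_reflection_process} of $N_n$ once one writes $I_n(t)=t-B_n(t)$. For (ii) I would note that the server cannot complete more services than the number of customers that have entered the system, so that $\sigma(B_n(t))\le N_n(0)+A_n(t)$ and hence $Q_n(t)\ge0$ by \eqref{eq:real_definition_queue_length}. For (iii), $I_n(t)=-\inf_{0\le s\le t}(X_n(s)^-)$ is visibly non-decreasing in $t$, with $I_n(0)=-(X_n(0))^-=0$ because $A_n(0)=0$ almost surely, so $X_n(0)=0$; and $N_n(0)\ge0$, so $f(0)\ge0$ as required.

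The substance of the proof is property (iv). Since $I_n(t)=t-B_n(t)$, the Stieltjes measure $\mathrm{d}I_n$ is carried by the set of times at which $B_n$ is locally constant, i.e.\ at which the server performs no work. To translate this into a statement about $Q_n$, I would bring in the workload process $V_n(t):=\sum_{i=1}^{A_n(t)}S_i-B_n(t)$ and observe, from \eqref{eq:net_input_process} and the definition of $B_n$, that $V_n=\phi(X_n)$, so that $V_n=X_n+I_n\ge0$. Consequently $\mathrm{d}I_n$ can charge only those times $t$ at which $X_n$ realizes a running infimum that is $\le0$, and at every such time $V_n(t)=0$. Since the service times are almost surely strictly positive, the workload vanishes exactly when the system is empty, i.e.\ $V_n(t)=0\iff Q_n(t)=0$; combining the two implications gives that $\mathrm{d}I_n$ is carried by $\{t:Q_n(t)=0\}$, which is (iv) (the factor $1/\E[S]$ plays no role in this statement). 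Once (i)--(iv) are in hand, uniqueness of the Skorokhod decomposition gives $Q_n=\phi(N_n)$, that is, \eqref{eq:definition_queue_length}.

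The hard part will be making this last chain rigorous: chiefly the equivalence ``server idle $\iff$ workload zero $\iff$ queue empty'' and the identity $V_n=\phi(X_n)$, in the presence of the jumps of $A_n$ and of the renewal counting process $\sigma$ together with the head start $N_n(0)$. One must be careful about the indexing of the service times between the head-start customers and the genuine arrivals, and about the behaviour at the right endpoints of idle periods, where an arrival makes $Q_n$ jump up from $0$ while $I_n$ has just stopped increasing; these points are handled by passing to left limits and invoking the right-continuity of the processes involved. Since $n$ is finite and all the processes are piecewise linear with finitely many jumps on compacts, no genuine analytic difficulty arises beyond this bookkeeping.
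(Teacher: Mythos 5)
Your proposal is correct and follows essentially the same route as the paper: both identify $(Q_n,\, I_n/\E[S])$ as the solution of the Skorokhod reflection problem driven by the free process $N_n$ and conclude by uniqueness of that decomposition (the paper cites \cite[Proposition 2.2, p.~251]{asmussen2003applied} for exactly this). The only difference is that you justify the complementarity condition $\int_0^\infty Q_n(s)\,\mathrm{d}I_n(s)=0$ in more detail via the workload process, whereas the paper simply asserts that $I_n$ increases only when $Q_n=0$.
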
%
\begin{proof}
Start from \eqref{eq:real_definition_queue_length}, and add and subtract $(t-B_n(t))/\E[S]$, to obtain
\begin{align}%
Q_n(t) &= N_n(0) +  A_n(t) -  \sigma(B_n(t)) - (t-B_n(t))/\E[S] + (t-B_n(t))/\E[S]
\end{align}%
The representation \eqref{eq:definition_queue_length} will follow as the solution of the so-called \emph{Skorokhod problem}:
\begin{lemma}[Skorokhod problem {\cite[Proposition 2.2, p.~251]{asmussen2003applied}}]\label{lem:skorokhod_problem}%
Let $F(\cdot)$ be a real-valued stochastic process such that $F(0) = 0$. Assume $R(\cdot)$ is a non-decreasing right-continuous process such that the process $Q(\cdot)$ given by $Q(0) = q$ and $Q(t) = F(t) + R(t)$ is nonnegative for all $t$, and $\int_{0}^{\infty}Q(t)\mathrm d R(t) = 0$. Then $R(t) = \psi(q+F(\cdot))(t)$ and $Q(t) = \phi(q+F(\cdot))(t)$ (recall \eqref{eq:definition_reflection_mapping} and \eqref{eq:definition_local_time}).
\end{lemma}%
We now apply Lemma \ref{lem:skorokhod_problem} with the choices 
\begin{align}
F(t)&= A_n(t) -  \sigma(B_n(t)) - (t-B_n(t))/\E[S],\nnl
R(t) &=  N_n(0) + (t-B_n(t))/\E[S],\nnl
Q(t) &= Q_n(t).
\end{align}%
Note that $R(t) = N_n(0) + I_n(t)/\E[S]$, so that $R(t)$ is non-decreasing. This, together with the fact that $R(t)\geq0$, implies that $R(t)$ is of bounded variation. Moreover, $R(t)=N_n(0) + I_n(t)/\E[S]$ increases if and only if $Q_n(t) = 0$, that is $\int_0^{\infty}Q(t)\mathrm d R(t)=0$. Then Lemma \ref{lem:skorokhod_problem} implies that 
\begin{equation}%
R(t) = -\inf_{0\leq s \leq t} (N_n(0) +  F(t))^- = \psi(N_n)(t),
\end{equation}%
and
\begin{equation}%
Q(t) =  N_n(0) + F(t) -\inf_{0\leq s \leq t} (N_n(0) +  F(t))^- = \phi(N_n)(t).
\end{equation}%
\end{proof}
\begin{figure}[!hbt]%
\centering
\includegraphics{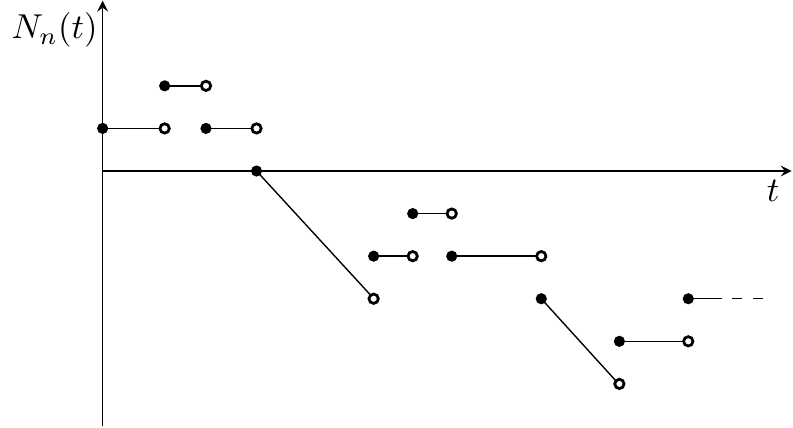}
%\includestandalone[mode=image|tex]{free_process}%{Tikz_pictures/free_process}
\caption{A sample path of the process $N_n(\cdot)$.}\label{fig:free_process}
\end{figure}%
%

% The idea behind this technique is to express quantities of interest, such as $Q_n(\cdot)$ as appropriate functional of simpler processes. If the functional is continuous in an appropriate topology and the simpler processes converge, by the Continuous Mapping Theorem (see \cite[Theorem 3.4.3]{StochasticProcess}) the more complicated processes also converge.
We will consider the scaled versions of the processes of interest
\begin{align}\label{eq:rescaled_processes}%
\mathbf N_n(t) &= n^{-\frac{1}{2\alpha-1}}\ell_2(n)N_n(\tau_n(t)),\\
\mathbf Q_n(t) &= \phi (\mathbf N_n)(t),\\
\tau_n(t) &= tn^{\frac{\alpha}{2\alpha-1}}\ell_1(n),
\end{align}%
where $\ell_1(\cdot)$ and $\ell_2(\cdot)$ are slowly varying functions that depend on $\ell(\cdot)$ in \eqref{eq:definition_alpha_stable_service_distribution} and are given explicitly in \eqref{eq:scaling_constant_time} and \eqref{eq:scaling_constant_space} below.
We can now state our main result.
\begin{theorem}[Scaling limit for the queue length process]\label{th:scaling_limit_queue_length}
Assume $N_n(0) = q_0 n^{\frac{1}{2\alpha-1}}\ell_2^{-1}(n)$ for some $q_0\geq0$. Then
\begin{equation}\label{eq:pre_reflection_pre_limit_queue_length}%
\mathbf N_n(\cdot) \sr{\mathrm{d}}{\rightarrow} \mathcal N(\cdot)\qquad\mathrm{in}~(\mathcal D, M_1),
\end{equation}%
where
\begin{equation}\label{eq:pre_reflection_scaling_limit_queue_length}
\mathcal N(t) = q_0 + s_{\alpha}\mathcal{S}(t) -\frac{\lambda^2}{2}t^2,
\end{equation}
$s_{\alpha} = \frac{1}{\E[S]^{1+1/\alpha}}$ and $\mathcal{S}(\cdot)$ is a spectrally positive $\alpha$-stable process. Moreover,
\begin{equation}\label{eq:scaling_limit_queue_length}%
\mathbf Q_n(\cdot)\sr{\mathrm{d}}{\rightarrow} \phi(\mathcal{N})(\cdot)\qquad \mathrm{in}~(\mathcal D,M_1).
\end{equation}%
\end{theorem}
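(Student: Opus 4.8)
\emph{Proof plan.} The convergence \eqref{eq:scaling_limit_queue_length} follows from \eqref{eq:pre_reflection_pre_limit_queue_length} by the Continuous Mapping Theorem, provided the reflection map $\phi$ is a.s.\ continuous at $\mathcal N$ for the $M_1$ topology. To check this, write $\phi(f)=f-g_f$ with $g_f(t)=\inf_{0\le s\le t}\big(f(s)\wedge0\big)$: the running-infimum functional $f\mapsto g_f$ is $M_1$-continuous, and whenever $f$ has no negative jumps the path $g_f$ is continuous, so $\phi(f)=f-g_f$ is then the difference of a c\`adl\`ag and a continuous path, a point of continuity of subtraction in $(\mathcal D,M_1)$. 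Since $\mathcal S$ is spectrally positive, $\mathcal N(t)=q_0+s_\alpha\mathcal S(t)-\tfrac{\lambda^2}{2}t^2$ has no negative jumps almost surely, so $\phi$ is a.s.\ continuous at $\mathcal N$, and everything reduces to proving $\mathbf N_n\Rightarrow\mathcal N$.

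Adding and subtracting $B_n(t)/\E[S]=\lambda B_n(t)$ in \eqref{eq:definition_pre_reflection_process} yields the decomposition
\begin{equation*}
N_n(t)=N_n(0)+\big(A_n(t)-\lambda t\big)+\big(\lambda B_n(t)-\sigma(B_n(t))\big),
\end{equation*}
whose second summand is a functional of the arrivals only, whose third is a functional of the services only, and which are coupled solely through the time change $B_n$. For the arrival summand, $A_n(s)\sim\mathrm{Bin}(n,\FT(s))$, and a functional law of large numbers together with a second-order Taylor expansion of the arrival-count mean around the origin, under the calibration of $\ell_1,\ell_2$ in \eqref{eq:scaling_constant_time}--\eqref{eq:scaling_constant_space}, gives $n^{-1/(2\alpha-1)}\ell_2(n)\big(A_n(\tau_n(\cdot))-\lambda\tau_n(\cdot)\big)\to-\tfrac{\lambda^2}{2}(\cdot)^2$ uniformly on compacts, in probability; the binomial fluctuations, of order $\tau_n(t)^{1/2}$, are of strictly smaller order than the space scale $n^{1/(2\alpha-1)}\ell_2^{-1}(n)$ precisely because $\alpha<2$. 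For the service summand, the classical stable functional CLT gives that $a_m^{-1}\big(\sum_{i=1}^{\lfloor mt\rfloor}S_i-mt\,\E[S]\big)$ converges in $(\mathcal D,J_1)$ to a centred, spectrally positive $\alpha$-stable L\'evy process, with $a_m$ regularly varying of index $1/\alpha$ built from $\ell$; inverting via the renewal FCLT, the centred renewal process $u\mapsto\lambda u-\sigma(u)$, scaled the same way and time-changed by $\tau_n$, converges in $(\mathcal D,M_1)$ to $s_\alpha\mathcal S(\cdot)$ with $s_\alpha=\E[S]^{-(1+1/\alpha)}$, where the sign flip inherent in the inversion turns the flat stretches of $\sigma$ into the upward jumps of the limit, which thus remains spectrally positive.

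The main work is to discard the time change. Set $\mathbf B_n(t)=\tau_n^{-1}\big(B_n(\tau_n(t))\big)\le t$, so that the rescaled third summand equals $\mathbf G_n(\mathbf B_n(\cdot))$, where $\mathbf G_n$ denotes the rescaled centred renewal process above, for which $\mathbf G_n\Rightarrow s_\alpha\mathcal S$. Once $\mathbf B_n\to\mathrm{id}$ uniformly on compacts in probability is established, the joint convergence $(\mathbf G_n,\mathbf B_n)\Rightarrow(s_\alpha\mathcal S,\mathrm{id})$ holds automatically (the second coordinate has a deterministic limit), and since $\mathrm{id}$ is continuous and strictly increasing, continuity of composition in $M_1$ (see \cite{StochasticProcess}) yields $\mathbf G_n(\mathbf B_n(\cdot))\Rightarrow s_\alpha\mathcal S(\cdot)$; adding the deterministic quadratic drift (a uniform, hence $M_1$-continuous, perturbation) then gives $\mathbf N_n\Rightarrow\mathcal N$, completing the proof. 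Because $I_n$ is non-decreasing, $\sup_{t\le T}\big(t-\mathbf B_n(t)\big)=I_n(\tau_n(T))/(n^{\alpha/(2\alpha-1)}\ell_1(n))$, so $\mathbf B_n\to\mathrm{id}$ amounts to $I_n(\tau_n(T))=o_{\mathbb P}(n^{\alpha/(2\alpha-1)}\ell_1(n))$ for each $T$, and the delicate point is to prove this \emph{without} invoking the convergence of $\mathbf N_n$, which would be circular. For this one expresses the idle time through a genuinely free process, e.g.\ $I_n(t)=\big(\sup_{0\le s\le t}\big(s-\sum_{i=1}^{N_n(0)+A_n(s)}S_i\big)\big)^+$, applies a stable FCLT to this random sum (the clock $s\mapsto N_n(0)+A_n(s)$ being asymptotically linear), and finds the supremum to be of the space order $n^{1/(2\alpha-1)}\times(\text{slowly varying})$; since the head start $N_n(0)$ has the same order and $\alpha>1$ makes the space scale $o(n^{\alpha/(2\alpha-1)})$, the bound follows. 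This is also the place where the $M_1$ rather than $J_1$ topology is indispensable: the renewals falling in the negligible-length window $(B_n(t),t]$ obstruct uniform, equivalently $J_1$, control of $\lambda B_n-\sigma(B_n)$ against $\lambda\,\cdot\,-\sigma(\cdot)$, but they collapse onto single time points in the $M_1$ limit, so every use of subtraction, composition, and reflection above must be, and can be, matched against the relevant $M_1$-continuity criteria.
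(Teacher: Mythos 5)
Your architecture is sound and for the most part coincides with the paper's: the same reduction of \eqref{eq:scaling_limit_queue_length} to \eqref{eq:pre_reflection_pre_limit_queue_length} via $M_1$-continuity of reflection at spectrally positive limits, the same stable FCLT for the centred renewal process via the partial-sum/counting-process equivalence with the same constant $s_\alpha=\E[S]^{-(1+1/\alpha)}$, the same non-circular treatment of the time change through the free net-input process $X_n$ (the paper settles for the cruder LLN bound $I_n(\tau_n(T))=\oP(\tau_n(1))$, which suffices, while you aim for the sharper order $n^{1/(2\alpha-1)}$), and the same $M_1$ composition and addition steps. The genuine difference is the drift. The paper replaces $A_n$ by a marked Poisson process $\Pi-R_n$, shows the centred $\Pi$ is negligible, and extracts the parabola from the repeated-marks count $R_n$ via a recursive coupling that squeezes $R_n$ between an explicit upper bound and a lower bound built from it --- the part advertised as the novel contribution. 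You instead work directly with the exact marginal $A_n(s)\sim\mathrm{Bin}(n,\FT(s))$ and read the parabola off the second-order Taylor expansion of $n\FT(\tau_n(t))-\lambda\tau_n(t)$, relegating the randomness to centred binomial fluctuations of order $\sqrt{\tau_n(t)}$, negligible against the space scale $n^{1/(2\alpha-1)}$ exactly because $\alpha<2$. This is a legitimate and arguably more elementary route that bypasses both the Poissonization and the coupling. Two details you should make explicit to close it: (i) negligibility of the fluctuations must hold uniformly in $t\le T$, so you need a maximal inequality (e.g.\ Doob applied to the compensated counting martingale associated with $A_n$, in the same way the paper applies Doob to its coupled Bernoulli sums), not merely pointwise variance bounds; and (ii) the Taylor remainder, of order $\tau_n(T)^3/n^2$, must be checked against the space scale --- it is $o(n^{1/(2\alpha-1)})$ precisely because $\alpha>1$.
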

Using basic properties of slowly-varying functions (see e.g.~\cite[Proposition 1.3.6]{bingham1989regular}), the scaling constants can be rewritten  as
\begin{align}%
 n^{\frac{\alpha}{2\alpha-1}}\ell_1(n)=n^{\frac{(1+o(1))\alpha}{2\alpha-1}},
\quad n^{-\frac{1}{2\alpha-1}}\ell_2(n) = n^{-\frac{1+o(1)}{2\alpha-1}}.
\end{align}%
Convergence in  $(\mathcal D, M_1)$ is a shorthand notation for convergence in distribution in the space of c\`adl\`ag functions $\mathcal D$ endowed with the $M_1$ topology. We elaborate on this in Section \ref{sec:notation}. In particular, for $\alpha = 2$ the scaling exponents are (asymptotically) the same as in the finite second moment case (see \cite{bet2014heavy}). In Figure \ref{fig:stable_motion_examples} we plot some sample paths of $\phi(\mathcal N)(\cdot)$ for different choices of $\alpha$. As expected, as $\alpha$ approaches $2$, the reflected stable motion starts to resemble a reflected Brownian motion. Notice also that by appropriately choosing $q_0>0$ it is possible to obtain a sizeable first busy period. 
\begin{figure}[!hbt]
	\centering
	
	\begin{subfigure}[b]{0.41\textwidth}
	\includegraphics{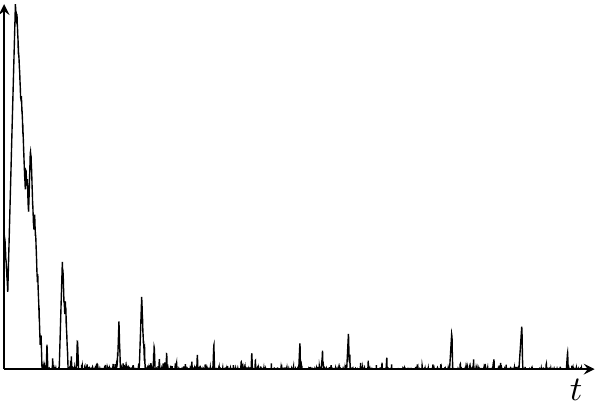}
	%\includestandalone[mode=image|tex]{Tikz_pictures/stable_motion_alpha_1_1}
	
	\caption*{$\alpha = 1.1$}
	\end{subfigure}%	
	\begin{subfigure}[b]{0.35\textwidth}
	\includegraphics{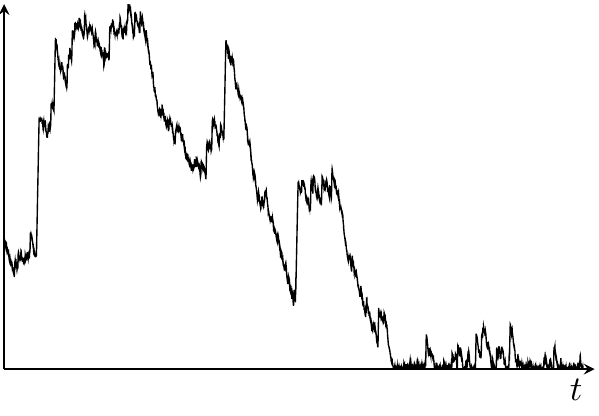}%
	%\includestandalone[mode=image|tex]{Tikz_pictures/stable_motion_alpha_1_5}
	
	\caption*{$\alpha = 1.5$}
	\end{subfigure}
	
	\medskip
	
	\begin{subfigure}[b]{0.41\textwidth}
	\includegraphics{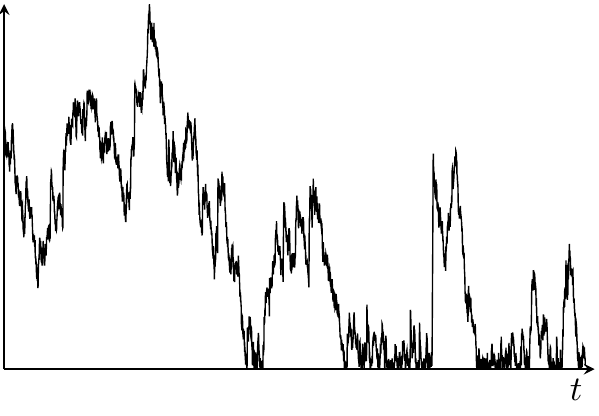}%
	%\includestandalone[mode=image|tex]{Tikz_pictures/stable_motion_alpha_1_9}
	
	\caption*{$\alpha = 1.9$}
	\end{subfigure}%
	\begin{subfigure}[b]{0.35\textwidth}
	\includegraphics{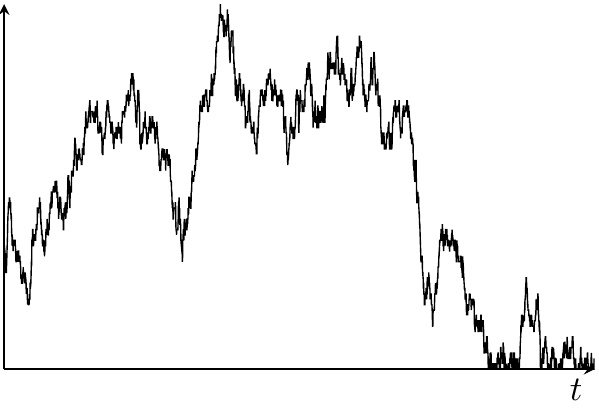}%
	%\includestandalone[mode=image|tex]{Tikz_pictures/stable_motion_alpha_2_0_work_in_progress}
	
	\caption*{$\alpha = 2$}
	\end{subfigure}
	\caption{Sample paths of $\phi(\mathcal N)(\cdot)$ for different choices of the power-law exponent $\alpha\in(1,2)$. The case $\alpha = 2$ corresponds to the finite second moment case \cite{bet2014heavy}. In all the cases $q_0=0.1$ and $\lambda = s_{\alpha} = 1$.}	
	\label{fig:stable_motion_examples}
	
\end{figure}%

Define the first hitting time for $\mathbf N_n(\cdot)$ of $x$ as
\begin{equation}%
H_{\mathbf N_n}(x) = \inf\{t > 0: \mathbf N_n(t) \leq x\}.
\end{equation}%
Then $H_{\mathbf N_n}(0)$ represents the first busy period of the $\Delta_{(i)}/G/1$ queue. The following corollary of Theorem \ref{th:scaling_limit_queue_length} characterizes the limiting distribution of $H_{\mathbf N_n}(0)$:
\begin{corollary}[Busy period convergence]
Under the assumptions of Theorem \ref{th:scaling_limit_queue_length}, as $n\rightarrow\infty$,
\begin{equation}%
H_{\mathbf N_n}(0) \sr{\mathrm d}{\rightarrow} H_{\mathcal N}(0)%\qquad \mathrm{in}~(\mathcal D,M_1).
\end{equation}%
\end{corollary}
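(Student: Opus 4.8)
The plan is to deduce the convergence of busy periods from Theorem~\ref{th:scaling_limit_queue_length} by applying the Continuous Mapping Theorem to the first-passage-time functional. If $q_0=0$, then $N_n(0)=0$, so $\mathbf N_n(0)=\mathcal N(0)=0$; before the first arrival the free process decreases linearly (recall \eqref{eq:definition_pre_reflection_process}), and $\mathcal S(\cdot)$ visits $(-\infty,0)$ in every right-neighbourhood of the origin, so $H_{\mathbf N_n}(0)=0$ for every $n$ and $H_{\mathcal N}(0)=0$ a.s.; the statement is then trivial, and we may assume $q_0>0$. Define $\Phi:\mathcal D\to[0,\infty]$ by $\Phi(f)=\inf\{t>0:f(t)\le0\}$, so that $H_{\mathbf N_n}(0)=\Phi(\mathbf N_n)$ and $H_{\mathcal N}(0)=\Phi(\mathcal N)$. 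Since $\mathbf N_n\sr{\mathrm{d}}{\rightarrow}\mathcal N$ in $(\mathcal D,M_1)$ and the quadratic drift in \eqref{eq:pre_reflection_scaling_limit_queue_length} forces $\mathcal N(t)\to-\infty$ (hence $\Phi(\mathcal N)<\infty$ a.s.), it suffices to show that $\Phi$ is, with probability one, continuous at $\mathcal N$ with respect to $M_1$-convergence.

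Two pathwise properties of $\mathcal N$ make $\Phi$ continuous at it, and both must be verified to hold almost surely. First, $\mathcal N$ reaches level $0$ without jumping across it: indeed $\mathcal N$ has exactly the jumps of the spectrally positive process $s_\alpha\mathcal S$, hence only positive jumps, so it cannot pass from a positive value to a non-positive one by a jump, and therefore $\mathcal N(H_{\mathcal N}(0)-)=\mathcal N(H_{\mathcal N}(0))=0$. Second, the crossing is strict: $\inf_{0\le s\le H_{\mathcal N}(0)+\eta}\mathcal N(s)<0$ for every $\eta>0$. This follows because $0$ is regular for $(-\infty,0)$ for a spectrally positive $\alpha$-stable process with $\alpha\in(1,2)$ — its sample paths have unbounded variation, so it is regular for both open half-lines — while the smooth drift $-\frac{\lambda^2}{2}t^2$ does not affect this local behaviour; applying the strong Markov property at the stopping time $H_{\mathcal N}(0)$ then shows that $\mathcal N$ enters $(-\infty,0)$ in every right-neighbourhood of $H_{\mathcal N}(0)$. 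Given these two properties, the first-passage-time map is continuous at $\mathcal N$ in the $M_1$ topology, since the obstructions to continuity of $\Phi$ are precisely jumps of the limiting path across the level and non-strict (tangential) crossings; see \cite{StochasticProcess} for the continuity theory of first-passage functionals on $(\mathcal D,M_1)$. The Continuous Mapping Theorem then yields $H_{\mathbf N_n}(0)\sr{\mathrm{d}}{\rightarrow}H_{\mathcal N}(0)$.

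A slightly more hands-on variant, which also isolates the delicate point, is to pass to the running-infimum processes. Because $\mathcal N$ has only positive jumps, $\underline{\mathbf N}_n(t):=\inf_{0\le s\le t}\mathbf N_n(s)$ converges in $M_1$ to the \emph{continuous}, non-increasing process $\underline{\mathcal N}(t):=\inf_{0\le s\le t}\mathcal N(s)$, by continuity of the infimum functional, and for $q_0>0$ one has $\Phi(\mathbf N_n)=\inf\{t>0:\underline{\mathbf N}_n(t)\le0\}$ up to an asymptotically negligible discrepancy. Since $\underline{\mathcal N}$ is continuous, first vanishes exactly at $H_{\mathcal N}(0)$ — using that $\mathcal N$ stays strictly positive on $[0,H_{\mathcal N}(0))$, a consequence of $\mathcal S$ creeping downward so that $\underline{\mathcal N}$ cannot reach $0$ strictly before $\mathcal N$ does — and is strictly decreasing just after $H_{\mathcal N}(0)$ by the regularity above, its first zero is a continuity point of the elementary first-passage map for continuous, eventually decreasing functions, and the Continuous Mapping Theorem again gives the claim. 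The main obstacle is exactly this pathwise bookkeeping: one must rule out a tangential approach of $\mathcal N$ to $0$ before the busy period ends and control the interplay between the jumps of $\mathcal S$ and the crossing. Once the problem is cast through the infimum functional, these reduce to standard regularity facts for spectrally positive $\alpha$-stable processes (regularity of $0$ for the lower half-line, downward creeping, continuity of the running infimum), and the desired weak convergence of busy periods follows.
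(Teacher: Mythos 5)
Your argument is correct and follows essentially the same route as the paper: both deduce the claim from Theorem \ref{th:scaling_limit_queue_length} via the Continuous Mapping Theorem applied to the first-passage functional, using that $\mathcal N$ has only positive jumps so that it cannot jump across the level $0$. The only difference is that the paper outsources the a.s.\ $M_1$-continuity of $f\mapsto H_f(0)$ at $\mathcal N$ to \cite[Chapter VI, Proposition 2.11]{jacod2003limit}, whereas you verify its hypotheses by hand (in particular the strict, non-tangential crossing, via regularity of $0$ for $(-\infty,0)$ for the spectrally positive $\alpha$-stable process), which is a worthwhile detail the paper leaves implicit.
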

\begin{proof}
Note that $t\mapsto\mathcal N(t)$ only has positive jumps. Then, by \cite[Chapter VI, Proposition 2.11]{jacod2003limit}, the functional $f\mapsto H_f(0)$ is continuous in $\mathcal N$ with probability one when $\mathcal D$ is endowed with the $M_1$ topology (indeed, it is continuous when $\mathcal D$ is endowed with the stronger $J_1$ topology). The conclusion follows by an application of the Continuous-Mapping Theorem.
\end{proof}
%%
%We next investigate the sample path properties of $\mathcal N(\cdot)$. The following lemma roughly states that in each compact time interval there are only a small number of customers with extremely large service times who contribute to the queue length process.
%%
%\begin{corollary}[Small jumps and big jumps]\label{lem:big_jumps_small_jumps}
%Under the above assumptions, for every fixed $\varepsilon>0$ there exists two processes $J_{\varepsilon}^{\text{s}}(\cdot)$, $J_{\varepsilon}^{\text{b}}(\cdot)$ and a decomposition of $\mathbf N_n(\cdot) = \mathbf N_n^{\text{s}}(\cdot) + \mathbf N_n^{\text{b}}(\cdot)$ such that
%%
%\begin{equation}%
%(\mathbf N_n^{\text{s}}(\cdot), \mathbf N_n^{\text{b}}(\cdot))\sr{\mathrm d}{\rightarrow}(J_{\varepsilon}^{\text{s}}(\cdot),J_{\varepsilon}^{\text{b}}(\cdot)),\qquad\text{in}~(\mathcal D^2,WM_1).
%\end{equation}%
%%
%Moreover, $J^{\text{s}}_{\varepsilon}(\cdot)$ and $J^{\text{b}}_{\varepsilon}(\cdot)$ are L\'evy processes with characteristic exponents, respectively 
%%
%\begin{align}%
%\int_0^{\varepsilon}(\e^{iux}-1-iux)\frac{1}{x^{\alpha+1}}\mathrm{d}x,\qquad \int_{\varepsilon}^{\infty}(\e^{iux}-1-iux)\frac{1}{x^{\alpha+1}}\mathrm{d}x.
%\end{align}%
%%
%\end{corollary}
%%
%We postpone the proof of Corollary \ref{lem:big_jumps_small_jumps} to the Appendix.
%

\subsection{Outline}
In Section \ref{sec:preliminaries} we give some background on $\alpha$-stable laws and a derivation of the scaling constants in \eqref{eq:rescaled_processes}. In Section \ref{sec:proof_main_theorem} we provide the proof of Theorem \ref{th:scaling_limit_queue_length}. We prove, separately, the convergence to the stable motion, which follows from classical arguments, and the convergence to the parabolic drift, which is done through a novel coupling argument.
Finally, in Section \ref{sec:discussion} we present some conclusions and open problems.
\section{Preliminaries}\label{sec:preliminaries}
\subsection{Notation} \label{sec:notation}
A common topology on $\mathcal D$, which extends the uniform topology $U$, is the so-called $J_1$ topology, defined by Skorokhod in \cite{skorokhod1956limit}. However, when dealing with limit processes with \emph{unmatched jumps} the coarser $M_1$ topology is needed. When dealing with vector-valued functions (taking values, say, in $\mathbb R^k$)we make use of the \emph{weak} $M_1$ topology $M^{\mathrm{W}}_1$, which coincides with the product topology on $\mathcal D\times \mathcal D\times\cdots \times \mathcal D = \mathcal D^k$. For an in-depth discussion on the various Skorokhod topologies, see \cite{StochasticProcess}. For most results in this paper, convergence of processes means convergence in distribution in the space $\mathcal D$ endowed with the $M_1$ topology. Recall that convergence $x_i\dconv x$ in $\mathcal D([0,\infty))$ is equivalent to convergence in $\mathcal D([0,T])$ for all $T$ that are continuity points of $x$. For a sequence of random variables $(X_i)_{i=1}^{\infty}$ we say that $X_i$ converges to $X$ in probability, and denote it by $X_i\Pconv X$, if for each $\varepsilon>0$, $\mathbb P(\vert X_i - X\vert \geq \varepsilon)\rightarrow0$ as $i\rightarrow\infty$.
 %We often deal with functions $f:\mathbb N\rightarrow\mathbb R$. These are embedded in the space $\mathcal D$ by interpolating their values as $t\mapsto f(\lfloor t\rfloor)$. To simplify notation, we will omit the composition with the floor function and write $f(t) = f(\lfloor t \rfloor)$.
With $X_i = \oP(Y_i)$ we mean that $X_i/Y_i\Pconv0$ as $i\rightarrow\infty$. Given two functions $f(\cdot), g(\cdot)$(either on the real numbers or on the integers)  the notation $f\sim g$ means $\lim_{x\rightarrow\infty} f(x)/g(x) = 1$, where $x\in\mathbb R$ or $x\in\mathbb N$. 
%\note{We have to be careful, because this way of defining an arrival process \emph{makes no sense at all}, not even considering pulling from the population, after $N_n(\cdot)<0$ for the first time.}
%
\subsection{FCLT}
We will establish a FCLT for $\sigma(\cdot)$, as this will give the stable motion in the limit. To do so we exploit the well-known equivalence between FCLT for partial sums and counting processes. 
Let $(S_i)_{i=1}^n$ be a sequence of nonnegative random variables and let 
\begin{equation}\label{eq:FCLT_equivalence_partial_sum_definition}%
\bm{\Sigma}_n(t) = \frac{\Sigma_{\lfloor nt\rfloor}-\E[S]nt}{c_n},
\end{equation}%
be its rescaled partial sum, where $\Sigma_k = S_1+\cdots+S_k$ and $(c_n)_{n=1}^{\infty}$ will be chosen appropriately. 
. Moreover, let $\sigma(t) = \max\{k\geq0\vert \Sigma_k\leq t\}$ and $\bm{\sigma}_n(\cdot)$ the corresponding rescaled process
\begin{equation}\label{eq:FCLT_equivalence_renewal_definition}%
\bm{\sigma}_n(t) = \frac{\sigma(nt) - \E[S]^{-1}nt}{c_n}.
\end{equation}%

The relation between the scaling limits of $\bm{\Sigma}_n(\cdot)$ and $\bm{\sigma}_n(\cdot)$ is described in the following theorem:
\begin{theorem}[{FCLT equivalence \cite[Theorem 7.3.2]{StochasticProcess}}]\label{th:FCLT_equivalence}%
Assume $(S_i)_{i\geq1}$ is a sequence of non-negative random variables, and  $(c_n)_{n\geq1}$ is such that $c_n\rightarrow\infty$, $n/c_n\rightarrow\infty$. Then,
\begin{equation}\label{eq:FCLT_equivalence_partial_sum_convergence_to_stable}%
\bm{\Sigma}_n(\cdot)\sr{\mathrm d}{\rightarrow} \mathcal S(\cdot)\qquad \mathrm{in}~(\mathcal D,M_1)
\end{equation}%
for some process $\mathcal S(\cdot)$
if and only if 
\begin{equation}\label{eq:FCLT_equivalence_renewal_convergence_to_stable}%
\bm{\sigma}_n(\cdot) \sr{\mathrm d}{\rightarrow} - \E[S]^{-1} \mathcal S\circ \E[S]^{-1}\emph{id}(\cdot)\qquad \mathrm{in}~(\mathcal D,M_1),
\end{equation}%
where $\emph{id}(\cdot)$ is the identity function.
\end{theorem}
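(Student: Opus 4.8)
The plan is to use the fact that $\sigma(\cdot)$ is the generalized inverse of the partial-sum sequence, to linearise the inversion relation around the deterministic fluid scaling, and to conclude with the continuous-mapping theorem in $(\mathcal D,M_1)$; the crux is that a discrepancy appears which is $M_1$-negligible but \emph{not} uniformly negligible, and this is precisely why the weaker $M_1$ topology is the natural one here. Fix $T>0$. Since $S_i\ge 0$, both $u\mapsto\Sigma_{\lfloor u\rfloor}$ and $\sigma(\cdot)$ are nondecreasing and, by the functional LLN for the renewal process $\sigma$, $\theta_n(\cdot):=\sigma(n\,\cdot)/n\to\E[S]^{-1}\mathrm{id}(\cdot)$ uniformly on $[0,T]$, the limit being continuous and strictly increasing. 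The inversion identity $\{\sigma(s)\ge k\}=\{\Sigma_k\le s\}$ gives $\Sigma_{\sigma(nt)}\le nt<\Sigma_{\sigma(nt)+1}$ for all $t\ge 0$ and also shows $\Sigma_k=\min\{s\ge 0:\sigma(s)\ge k\}$; hence $\Sigma$ and $\sigma$ are mutual generalized inverses and it suffices to prove \eqref{eq:FCLT_equivalence_partial_sum_convergence_to_stable}$\Rightarrow$\eqref{eq:FCLT_equivalence_renewal_convergence_to_stable}, the converse following by interchanging the roles of $\Sigma$ and $\sigma$.

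Assume \eqref{eq:FCLT_equivalence_partial_sum_convergence_to_stable}. Substituting $\Sigma_m=\E[S]m+c_n\bm{\Sigma}_n(m/n)$ (valid for integer $m$) into $\Sigma_{\sigma(nt)}\le nt<\Sigma_{\sigma(nt)+1}$ and dividing by $c_n$ gives, for every $t\in[0,T]$,
\begin{equation*}
-\E[S]^{-1}\bm{\Sigma}_n\bigl(\theta_n(t)+\tfrac1n\bigr)-\tfrac1{c_n}\ <\ \bm{\sigma}_n(t)\ \le\ -\E[S]^{-1}\bm{\Sigma}_n\bigl(\theta_n(t)\bigr).
\end{equation*}
Thus $\bm{\sigma}_n=-\E[S]^{-1}\,\bm{\Sigma}_n\circ\theta_n+\rho_n$ with $\sup_{t\le T}|\rho_n(t)|\le c_n^{-1}+\E[S]^{-1}c_n^{-1}\bigl(\max_{j\le\sigma(nT)+1}S_j+\E[S]\bigr)$, because $\bm{\Sigma}_n(\theta_n(t)+\tfrac1n)-\bm{\Sigma}_n(\theta_n(t))=(S_{\sigma(nt)+1}-\E[S])/c_n$. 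This remainder is not uniformly small: its supremum has the order of $c_n^{-1}$ times the largest service seen, i.e.\ the order of the largest jump of $\bm{\Sigma}_n$, which is only $O_{\mathbb P}(1)$. It is, however, $M_1$-negligible, for a geometric reason: $-\E[S]^{-1}\bm{\Sigma}_n\circ\theta_n$ and $\bm{\sigma}_n$ coincide at every grid point $t=\Sigma_j/n$, while between consecutive grid points both stay inside a common box of width $S_{j+1}/n$ and bounded height, the one tracing a steep ramp exactly where the other stays flat and then jumps. A union bound on the regularly-varying tail together with the LLN for $\sigma$ yields $\tfrac1n\max_{j\le\sigma(nT)+1}S_j\Pconv 0$ for $\alpha>1$, so these boxes have vanishing width and a parametric-representation estimate gives
\begin{equation*}
d_{M_1,[0,T]}\bigl(\bm{\sigma}_n,\,-\E[S]^{-1}\bm{\Sigma}_n\circ\theta_n\bigr)\ \le\ \max\Bigl(c_n^{-1},\ \tfrac1n\max_{j\le\sigma(nT)+1}S_j\Bigr)\ \Pconv\ 0.
\end{equation*}

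It then only remains to identify the limit of $-\E[S]^{-1}\bm{\Sigma}_n\circ\theta_n$. Since $\theta_n\to\E[S]^{-1}\mathrm{id}$ in probability to a deterministic continuous strictly increasing limit, $(\bm{\Sigma}_n,\theta_n)\dconv(\mathcal S,\E[S]^{-1}\mathrm{id})$ jointly in $(\mathcal D^2,M^{\mathrm{W}}_1)$, and composition with a continuous strictly increasing time change is $M_1$-continuous, so by the continuous-mapping theorem $\bm{\Sigma}_n\circ\theta_n\dconv\mathcal S\circ\E[S]^{-1}\mathrm{id}$; multiplying by the constant $-\E[S]^{-1}$ and combining with the last display via the $M_1$-version of Slutsky's lemma gives $\bm{\sigma}_n\dconv-\E[S]^{-1}\mathcal S\circ\E[S]^{-1}\mathrm{id}$, which is \eqref{eq:FCLT_equivalence_renewal_convergence_to_stable}. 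I expect the main obstacle to be exactly the $M_1$-negligibility of $\rho_n$: since the residual service $S_{\sigma(nt)+1}$ can be as large as the largest jump, the approximation $\bm{\sigma}_n\approx-\E[S]^{-1}\bm{\Sigma}_n\circ\theta_n$ fails both uniformly and in $J_1$ and survives only in $M_1$, so one must argue directly with completed graphs — a steep ramp versus a flat-then-jump on shrinking time-intervals with matching endpoints — rather than appeal to an off-the-shelf inverse-continuity theorem.
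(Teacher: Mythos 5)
The paper does not prove this statement at all --- it is quoted directly from Whitt \cite[Theorem 7.3.2]{StochasticProcess} --- so there is no in-paper argument to compare against; what you have written is a correct reconstruction of the standard proof, and it follows essentially the same route as Whitt's own (linearize the inverse relation around the fluid limit, then show the error is null in $M_1$; Whitt packages this as continuity of the inverse map with centering). Your sandwich $\Sigma_{\sigma(nt)}\le nt<\Sigma_{\sigma(nt)+1}$ does give $\bm{\sigma}_n=-\E[S]^{-1}\bm{\Sigma}_n\circ\theta_n+\rho_n$ with the stated bound on $\rho_n$, and your central observation is exactly the right one: $\sup_t|\rho_n(t)|$ is only $O_{\mathbb P}(1)$ (the largest scaled jump), so neither the uniform nor the $J_1$ approximation works, yet the two completed graphs agree at every renewal epoch $\Sigma_j/n$ and in between are confined to boxes of width $S_{j+1}/n$, one tracing a steep ramp where the other is flat-then-jump. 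The claimed estimate $d_{M_1,[0,T]}\le\max\bigl(O(c_n^{-1}),\tfrac1n\max_{j}S_j\bigr)$ does hold: a parametrization that lets the step function linger near the left endpoint of each box keeps the space discrepancy at $O(c_n^{-1})$ while the time discrepancy never exceeds the box width, and the per-interval parametrizations concatenate because the graphs meet at the grid points. Two steps are asserted rather than proved and should be fleshed out in a self-contained write-up: (i) the converse implication ``by interchanging the roles of $\Sigma$ and $\sigma$'' is indeed symmetric, but there the FLLN for $\Sigma$ must be \emph{derived} from the assumed FCLT for $\sigma$ together with $c_n/n\to0$, not taken from the i.i.d.\ structure (likewise, in the stated generality the bound $\tfrac1n\max_j S_j\Pconv0$ should come from tightness of $\bm{\Sigma}_n$ rather than from the regularly-varying tail); and (ii) the identification of the limit of $\bm{\Sigma}_n\circ\theta_n$ requires the $M_1$ random-time-change theorem \cite[Theorem 13.2.3]{StochasticProcess}, whose hypotheses ($\theta_n$ nondecreasing, limit continuous and strictly increasing, joint convergence automatic since one limit is deterministic) you have in hand. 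Neither point is a gap in substance, and only the forward implication is used in the paper.
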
%
The topology $M_1$ plays a crucial role in Theorem \ref{th:FCLT_equivalence}. Indeed, it can be seen that while \eqref{eq:FCLT_equivalence_partial_sum_convergence_to_stable} holds in most cases in the $J_1$ topology, the convergence \eqref{eq:FCLT_equivalence_renewal_convergence_to_stable} can only take place in the $M_1$ topology when the limit process has positive jumps, see \cite[Chapter 7.3.2]{StochasticProcess} for a discussion on this fact. By assumption \eqref{eq:definition_alpha_stable_service_distribution} the sequence $(S_i)_{i\geq1}$ is in the domain of attraction of an $\alpha$-stable motion, that is \eqref{eq:FCLT_equivalence_partial_sum_convergence_to_stable} holds, and
%Without loss of generality, from now on we assume 
%%
%\begin{equation}%
%\E[S]=1,
%\end{equation}%
%%
%equivalently $\lambda = 1$. By \eqref{eq:definition_alpha_stable_service_distribution} and
%assumption, $S$ is in the domain of attraction of a (right-skewed) $\alpha$-stable distribution ($\alpha\in(1,2)$). By definition, this means that $S\geq0$ a.s. and its tail decays like 
%%
%\begin{equation}\label{eq:tail_behaviour_stable_law}%
%\mathbb P (S \geq t) = 1 - F(t) = t^{-\alpha}\ell(t),\qquad C>0.
%\end{equation}%
%%
%for a slowly-varying function $\ell(\cdot)$. Moreover, 
%by standard stable process theory,
%%
%\begin{equation}\label{eq:partial_sum_convergence_to_stable}%
%\frac{\sum_{i=1}^{\lfloor n t\rfloor } (S_i-1)}{\ell_0n^{1/\alpha}}\sr{\mathrm d}{\rightarrow} \mathcal S(t)\qquad \text{in}~(D,J_1),
%\end{equation}%
%%
$\mathcal S(\cdot)$ is a centered, spectrally positive $\alpha$-stable motion. 
%Moreover, the function $\ell_0(\cdot)$ appearing in \eqref{eq:partial_sum_convergence_to_stable} is a slowly-varying function a priori different from $\ell(\cdot)$ in \eqref{eq:definition_alpha_stable_service_distribution}.
Then, by Theorem \ref{th:FCLT_equivalence} the process $\bm{\sigma}_n(\cdot)$ is also in the domain of attraction of an $\alpha$-stable motion. Note that the space scaling constants $c_n$ in \eqref{eq:FCLT_equivalence_partial_sum_definition} and \eqref{eq:FCLT_equivalence_renewal_definition} are the same.
\subsection{Poissonian representation of the arrival process}
In order to  further simplify the representation of $Q_n(t)$ we now introduce an alternative characterization of the arrival process as a thinned, marked Poisson process. It is constructed as follows. Given $(\Pi(t))_{t\geq0}$, a homogeneous Poisson process with parameter $\lambda n$, assign to each of its points a mark chosen uniformly in $[n]:=\{1,\ldots, n\}$. We then discard a point if it has a mark that has already been observed in the past. Therefore, conditioned on the (different) marks $M_1,\ldots, M_{k-1}$, the next point of $(\Pi(t))_{t\geq0}$ will be accepted with probability $(n-\vert \{M_1,\ldots, M_{k-1}\}\vert)/n = 1 - (k-1)/n$. We denote this thinned process as $A_n^m(t)$. Then, $A_n^m(t)$ can be represented as
\begin{equation}\label{eq:definition_arrival_process}%
A_n^m(t) = \Pi(t) - R_n(t),
\end{equation}%
where $R_n(t)$ counts the number of \emph{repeated} marks. We emphasize that $\Pi(\cdot)$ and $R_n(\cdot)$ are \emph{not} independent. The arrival process just defined is strongly related with the i.i.d.~sampling in the $\Delta_{(i)}/G/1$ queue, as we now discuss. 

In the $\DG$ queue arrivals are given by an i.i.d.~sequence of arrival clocks $(T_{i})_{i=1}^n$, where it is assumed that customer $i\in[n]$ joins the queue at time $T_i$. This definition departs from the usual queueing assumption of a renewal process, which entails i.i.d.~\emph{inter}-arrival times rather than \emph{arrival} times. However, the above characterization as a marked Poisson process, which holds when the arrival clocks are exponentially distributed, is closer to the usual renewal setting. In what follows we show that the two are equivalent. First, let us introduce some preliminary notation and results.
Given a sequence of random variables $(X_i)_{i=1}^n$, let $X_{(1)}\leq X_{(2)}\leq\cdots\leq X_{(n)}$ denote their order statistics. When $(X_i)_{i=1}^n$ are i.i.d.~exponential random variables, the distribution of the order statistics are well known:
\begin{lemma}[Order statistics of exponentials]\label{order_statistics}
Let $E_1,\ldots,E_n$ be independent exponentially distributed random variables with mean one. Then,
\begin{align}%
(E_{(j)})_{j=1}^n\stackrel{d}{=}\Big(\sum_{s=1}^j\frac{E_s}{n-s+1}\Big)_{j=1}^n.
\end{align}%
\end{lemma}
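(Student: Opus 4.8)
The plan is to prove the classical R\'enyi representation, and the cleanest route is via the memoryless property of the exponential distribution, by induction on $n$. For the base case, the minimum $E_{(1)}=\min_{i\in[n]}E_i$ of $n$ i.i.d.\ rate-one exponentials satisfies $\mathbb P(E_{(1)}>t)=e^{-nt}$, so $E_{(1)}\stackrel{d}{=}E_1/n$, matching the $j=1$ term. For the inductive step, I would condition jointly on the index $I$ achieving the minimum and on the value $E_{(1)}=t$; by the lack-of-memory property, conditionally on $\{E_i>t\ \forall i\}$ the overshoots $(E_i-t)_{i\neq I}$ are i.i.d.\ rate-one exponentials, independent of both $t$ and $I$. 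Hence the vector of higher spacings $(E_{(j)}-E_{(j-1)})_{j=2}^n$ is distributed as the spacings of the order statistics of $n-1$ i.i.d.\ rate-one exponentials and is independent of $E_{(1)}$. Applying the induction hypothesis to those $n-1$ variables gives $(E_{(j)}-E_{(j-1)})_{j=2}^n\stackrel{d}{=}(E_j/(n-j+1))_{j=2}^n$ as independent random variables, independent of $E_{(1)}\stackrel{d}{=}E_1/n$. Telescoping, $E_{(j)}=\sum_{s=1}^{j}(E_{(s)}-E_{(s-1)})$ with $E_{(0)}:=0$, which yields the claimed joint distribution.

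As an alternative that is perhaps quicker to write out fully, I would argue directly from densities. The order statistics $(E_{(1)},\dots,E_{(n)})$ have joint density $n!\,e^{-\sum_{j=1}^n x_j}$ on the open simplex $\{0<x_1<\cdots<x_n\}$. Changing variables to the spacings $y_j=x_j-x_{j-1}$ (with $x_0:=0$), which is a bijection from the open simplex onto $(0,\infty)^n$ with unit Jacobian, one computes $\sum_{j=1}^n x_j=\sum_{j=1}^n (n-j+1)y_j$, so the density factorizes as $\prod_{j=1}^n (n-j+1)\,e^{-(n-j+1)y_j}$. Thus the spacings are independent, the $j$-th being exponential of rate $n-j+1$, i.e.\ equal in law to $E_j/(n-j+1)$; summing the first $j$ of them recovers the statement.

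The main obstacle is not conceptual but a matter of care. In the memoryless approach one must state precisely the conditional law of the overshoots and justify its independence from the pair $(I,E_{(1)})$ before invoking the induction hypothesis; in the density approach one must check that the spacing map is a genuine bijection between the open simplex and the open positive orthant, so that no boundary contributions are dropped and the factorization of the density legitimately identifies the joint law. Both are routine, and either can be presented in a few lines.
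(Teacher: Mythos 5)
Both of your arguments are correct and complete; the paper itself does not prove this lemma but defers to \cite[Section 2.5]{david2003order}, where the standard proof is precisely your second route (joint density of the order statistics, change of variables to the spacings, factorization into independent exponentials of rates $n-j+1$). Your first, memoryless-induction argument is an equally valid alternative and arguably more probabilistic, but given that the paper only cites a reference, either write-up would serve; the density computation is the shorter one to make fully rigorous.
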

See for example \cite[Section 2.5]{david2003order} for a proof. Lemma \ref{order_statistics} allows us to relate the process $A_n^m(\cdot)$ we just defined to the arrival process in the $\DG$ queue, as follows:
\begin{lemma}%
For all $t\geq0$,
\begin{equation}%
A_n^m(t) \sr{\mathrm d}{=} A_n(t).
\end{equation}%
\end{lemma}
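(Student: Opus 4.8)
The plan is to show that the marked Poisson construction $A_n^m(\cdot)$ and the order-statistics arrival process $A_n(\cdot)$ have the same law by exhibiting a common representation of their jump times. First I would observe that $A_n^m(t)$ and $A_n(t)$ are both nondecreasing, integer-valued, pure-jump processes starting at $0$ with unit jumps and a total of $n$ jumps, so each is determined by the ordered sequence of its jump times. Hence it suffices to show that the vector of jump times of $A_n^m(\cdot)$ has the same distribution as $(T_{(1)},\ldots,T_{(n)})$, the order statistics of the i.i.d.\ exponential clocks $(T_i)_{i=1}^n$.

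Next I would identify the jump times of $A_n^m(\cdot)$. By construction, the $k$-th accepted point is the first point of the underlying homogeneous Poisson process $\Pi(\cdot)$ of rate $\lambda n$, occurring after the $(k-1)$-st accepted point, whose mark is one of the $n-(k-1)$ as-yet-unseen labels. Conditionally on the first $k-1$ accepted points and their (distinct) marks, each subsequent Poisson point is accepted independently with probability $1-(k-1)/n = (n-k+1)/n$; by the thinning property of Poisson processes, the inter-arrival time between the $(k-1)$-st and $k$-th accepted points is therefore exponential with rate $\lambda n \cdot (n-k+1)/n = \lambda(n-k+1)$, independent of the past. Writing $G_k$ for this $k$-th inter-acceptance time, we get that the $j$-th jump time of $A_n^m(\cdot)$ equals $\sum_{k=1}^j G_k$ with $G_k \sim \mathrm{Exp}(\lambda(n-k+1))$ independent. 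Equivalently, $G_k \stackrel{d}{=} E_k/(\lambda(n-k+1))$ where $E_1,\ldots,E_n$ are i.i.d.\ $\mathrm{Exp}(1)$, so the $j$-th jump time is distributed as $\tfrac1\lambda\sum_{k=1}^j E_k/(n-k+1)$.

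Finally I would invoke Lemma \ref{order_statistics}: since the $T_i$ are i.i.d.\ exponential with mean $1/\lambda$, we may write $T_i = E_i/\lambda$ with $E_i$ i.i.d.\ $\mathrm{Exp}(1)$, and then $(T_{(j)})_{j=1}^n \stackrel{d}{=} \big(\tfrac1\lambda\sum_{s=1}^j E_s/(n-s+1)\big)_{j=1}^n$. This is exactly the law of the jump-time vector of $A_n^m(\cdot)$ derived above. Since both processes are the counting functions of their respective jump-time vectors and these vectors are equal in distribution, we conclude $A_n^m(t)\stackrel{d}{=}A_n(t)$ for all $t\geq 0$ (in fact as processes, jointly in $t$).

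The main obstacle is the careful justification of the conditional thinning step: one must argue that, given the history up to the $(k-1)$-st acceptance, the rejection of subsequent points is i.i.d.\ Bernoulli with the stated probability and independent of the Poisson clock, so that the strong Markov property of the Poisson process applies and the next inter-acceptance gap is genuinely exponential with the claimed rate. This requires noting that the mark of a newly arriving Poisson point is uniform on $[n]$ independently of everything seen so far, hence the probability it is ``fresh'' depends only on the number $k-1$ of distinct marks already accepted, not on which marks they were; everything else is bookkeeping.
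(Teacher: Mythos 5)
Your proposal is correct and follows essentially the same route as the paper: both identify the inter-acceptance times of the thinned marked Poisson process as independent exponentials with rate $\lambda(n-k+1)$ (via the thinning property and the uniformity of fresh marks) and match them against the spacings of the exponential order statistics given by Lemma \ref{order_statistics}. Your write-up is in fact slightly more careful than the paper's, which is sloppy about the rate of $\Pi(\cdot)$ (calling it ``rate one'' after defining it with rate $\lambda n$) and only verifies equality of the inter-arrival laws rather than spelling out that this identifies the law of the whole counting process.
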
%
\begin{proof}

The (ordered) arrival times in the $\Delta_{(i)}/G/1$ queue are precisely the order statistics of $(T_{i})_{i=1}^n$ and the interarrival times are the  differences between the order statistics. By Lemma \ref{order_statistics}, the distributions of the interarrival times are
\begin{equation}%
E_{(k)}-E_{(k-1)} \sr{\mathrm d}{=}\frac{E_k}{n-k+1},\qquad k\geq1,
\end{equation}%
where we set $E_{(0)} = 0$ for convenience. Multiplying both sides by $n$ gives
\begin{equation}\label{eq:difference_order_statistics_exponentials}%
n(E_{(k)}-E_{(k-1)}) \sr{\mathrm d}{=}\frac{E_k}{1-\frac{k-1}{n}}.
\end{equation}%
Now consider the arrival process  defined in \eqref{eq:definition_arrival_process}. Conditioned on the process up to the arrival $k-1$, the next point of $\Pi(\cdot)$ is accepted with probability $1-\frac{k-1}{n}$. Then, since $\Pi(\cdot)$ is a rate one Poisson process, the time at which the next point of $A^m_n(\cdot)$ occurs is distributed as an exponential random variable with rate $1\cdot(1-\frac{k-1}{n})$.
Equation \eqref{eq:difference_order_statistics_exponentials} then implies that the inter-arrival times in the arrival process just defined are equal (in distribution) to the inter-arrival times of \eqref{eq:delta_arrival_process}.
\end{proof}

\subsection{Determining the scaling constants}\label{sec:determining_scaling_constants}
We now derive the space and time scaling that allow us to obtain the limit process $\mathcal N(\cdot)$ in \eqref{eq:pre_reflection_scaling_limit_queue_length}. We first derive the scaling of time denoted by $k = k(n)$. It is well known that, whenever the limit $\mathcal S(\cdot)$ in \eqref{eq:FCLT_equivalence_partial_sum_convergence_to_stable} is an $\alpha$-stable motion, the fluctuations of $\sum^{\lfloor kt\rfloor}_{i=1}S_i$ around its mean are of the order $c_k=\ell_0(k) k^{1/\alpha}$ (see e.g.~\cite[Theorem 4.5.1]{StochasticProcess}), where $\ell_0(\cdot)$ is a slowly-varying function that is a priori different from $\ell(\cdot)$ in \eqref{eq:definition_alpha_stable_service_distribution}. Moreover, in \eqref{eq:drift_lower_bound_computations} below we show that the highest order contribution to the drift component $R_n(kt)$ is $\Pi(kt)^2/(2n) = \OP(k^2/n)$, all the other terms being negligible. In the process $\mathcal N(\cdot)$ both a drift and a random component appear, so that we must have
\begin{equation}\label{eq:form_scaling_constants}%
\ell_0(k) k^{1/\alpha} = k^2/n.
\end{equation}%
Equivalently,
\begin{equation}\label{eq:form_scaling_constants_computations}%
 \ell_0^{-\frac{\alpha}{2\alpha-1}}(k) k = n^{\frac{\alpha}{2\alpha-1}},
\end{equation}%
where $\ell_0^{-\frac{\alpha}{2\alpha-1}}(\cdot)$ is, by basic properties of slowly varying functions, again slowly varying. On the left-hand side of \eqref{eq:form_scaling_constants_computations} we recognize a regularly-varying function with index $1$. By \cite[Theorem 1.5.12]{bingham1989regular} each regularly-varying function with index $\gamma$ admits an (asymptotic) inverse that is itself regularly varying, with index $1/\gamma$. Therefore, there exists a slowly-varying function $\rho(\cdot)$ so that we must have
\begin{equation}\label{eq:scaling_constant_time}%
k = n^{\frac{\alpha}{2\alpha-1}}\rho(n^{\frac{\alpha}{2\alpha-1}}).
\end{equation}%
Any sequence $(k(n))_{n\geq1}$ that satisfies condition \eqref{eq:scaling_constant_time} is suitable for our purposes, so that we simply take $k(n)=n^{\frac{\alpha}{2\alpha-1}} \ell_1(n)$, where $\ell_1(n) = \rho(n^{\frac{\alpha}{2\alpha-1}})$. Note that $n\mapsto\ell_1(n)$ is again slowly varying. Therefore, the rescaled time parameter is defined as
\begin{equation}%
\tau_n(t) := t n^{\frac{\alpha}{2\alpha-1}}\ell_1(n).
\end{equation}%
We shall denote the time scaling factor by $\tau_n(1) = n^{\frac{\alpha}{\alpha-1}}\ell_1(n)$.
In order to obtain the space-scaling sequence $(s_n)_{n\geq1}$, it is enough to insert $k=n^{\frac{\alpha}{2\alpha-1}} \ell_1(n)$  into $f(k) := k^2/n$. Therefore, we define $s_n$ as
\begin{equation}\label{eq:scaling_constant_space}%
s_n = \big((n^{\frac{\alpha}{2\alpha-1}}\ell_1(n))^2/n\big)^{-1} = \ell_1^{-2}(n)n^{-\frac{1}{2\alpha-1}}=:\ell_2(n) n^{-\frac{1}{2\alpha-1}},
\end{equation}%
%
% By basic properties of regularly varying functions (see e.g. \cite[Proposition 1.5.7]{bingham1989regular}) $f(n^{\frac{\alpha}{2\alpha-1}}\ell_1(n))$ is again regularly varying, with index $\frac{1}{\alpha} \cdot \frac{\alpha}{2\alpha-1} = \frac{1}{2\alpha-1}$. 
%Therefore, there exists a slowly varying function $\ell_2$ such that the space-scaling sequence $s_n$ has the form
%Then, the space-scaling sequence $s_n$ has the form
%%
%\begin{equation}\label{eq:scaling_constant_space}%
%s_n = \ell_2(n) n^{-\frac{1}{2\alpha-1}},
%\end{equation}%
%%
%where, by basic properties of slowly varying functions,
where $\ell_2(n) = \ell_1^{-2}(n)$ is again slowly varying.
\section{Proof of Theorem \ref{th:scaling_limit_queue_length}}\label{sec:proof_main_theorem}
%In this section we carry out the proof of Theorem \ref{th:scaling_limit_queue_length}. We split the process $N_n(t)$, exploiting definition \eqref{eq:definition_pre_reflection_process} directly,  as
%%
%\begin{equation}\label{eq:split_pre_reflection_queue_length}%
%N_n(t) = N_n(0) + \Pi(t) - \sigma(t) - R_n(t).
%\end{equation}%
%%
Rewriting equation \eqref{eq:definition_pre_reflection_process} using \eqref{eq:definition_arrival_process} gives
\begin{align}\label{eq:definition_pre_reflection_process_rewriting}%
N_n(t) &\sr{\mathrm d}{=} N_n(0) + (A^m_n(t) - t/\E[S]) + (B_n(t) /\E[S] - \sigma(B_n(t))\nnl
&=N_n(0) + (\Pi(t) - t/\E[S]) + (B_n(t) /\E[S] - \sigma(B_n(t))) -R_n(t).
\end{align}%
For simplicity, we introduce the scaled version of the arrival and service processes, and of the busy time, as 
\begin{align}%
\mathbf \Pi_n(t) &= n^{-\frac{1}{2\alpha-1}}\ell_2(n)(\Pi(\tau_n(t) )- \tau_n(t)/\E[S]), \nnl
\mathbf R_n(t) &= n^{-\frac{1}{2\alpha-1}}\ell_2(n)R_n(\tau_n(t)),\nnl
\bm \sigma_n(t) &= n^{-\frac{1}{2\alpha-1}}\ell_2(n)(\tau_n(t)/\E[S]-\sigma(\tau_n(t))), \nnl
\hat{\mathbf{ B}}_n(t) &= B_n(\tau_n(t))/\tau_n(1).
\end{align}%
Assume $N_n(0) = q_0 n^{\frac{1}{2\alpha-1}}\ell_1(n)$, for some $q_0\geq0$. After rescaling, equation \eqref{eq:definition_pre_reflection_process_rewriting} is then
\begin{align}\label{eq:pre_reflection_queue_split}%
\mathbf N_n(\tau_n(t)) &=  q_0 + \mathbf \Pi_n(t) + \bm \sigma_n (\hat{\mathbf{B}}_n(t)) - \mathbf R_n(t).
\end{align}%
The proof of Theorem \ref{th:scaling_limit_queue_length} proceeds as follows. First, the term $\mathbf \Pi_n(\cdot)$ is shown to be negligible in the limit. Second, $\bm \sigma_n(\cdot)$ converges to an $\alpha$-stable motion by \eqref{eq:definition_alpha_stable_service_distribution} and Theorem \ref{th:FCLT_equivalence}. Third, $\mathbf R_n(\cdot)$ is shown to converge to the parabolic drift $-\lambda^2/2 t^2$ in Section \ref{sec:drift_limit}. Finally, $\hat{\mathbf{B}}_n(\cdot)$ is shown to converge to the identity function. All these results are then pieced together in Section \ref{sec:proof_main_theorem_wrap_up}. Convergence of the above processes is proven in $\mathcal D([0,T])$ for a fixed $T>0$. Since $T$ is arbitrary, this implies convergence in $\mathcal D([0,\infty])$ by \cite[Lemma 3, p.174]{billingsley1999convergence}.
\begin{lemma}\label{lem:poisson_term_is_negligible}%
As $n\rightarrow\infty$,
\begin{equation}%
\sup_{t\leq T}\vert\mathbf \Pi(\tau_n(t))\vert\sr{\mathbb P}{\rightarrow}0.
\end{equation}%
\end{lemma}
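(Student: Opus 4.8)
The plan is to view $\mathbf{\Pi}_n(\cdot)$ as a deterministically rescaled compensated Poisson process and to estimate it in $L^2$ via Doob's maximal inequality; the point is that the chosen normalisation shrinks the diffusive fluctuations of that martingale to zero precisely because $\alpha<2$. Writing
\begin{equation}
\mathbf{\Pi}_n(t)=n^{-\frac{1}{2\alpha-1}}\ell_2(n)\,M(\tau_n(t)),\qquad M(s):=\Pi(s)-s/\E[S],
\end{equation}
I observe that $\Pi$ has rate $1/\E[S]$ (so that $\mathbf{\Pi}_n$ is indeed a centred process), and hence that $(M(s))_{s\ge0}$ is a mean-zero, square-integrable, c\`adl\`ag martingale --- a compensated Poisson process --- with $\E[M(s)^2]=\mathrm{Var}(\Pi(s))=s/\E[S]$.

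Since $t\mapsto\tau_n(t)$ is non-decreasing, $\sup_{t\le T}|\mathbf{\Pi}_n(t)|=n^{-\frac{1}{2\alpha-1}}\ell_2(n)\,\sup_{s\le\tau_n(T)}|M(s)|$. Applying Doob's $L^2$ maximal inequality to the martingale $M$, then using $\tau_n(T)=Tn^{\frac{\alpha}{2\alpha-1}}\ell_1(n)$ and recalling $\ell_2=\ell_1^{-2}$ from \eqref{eq:scaling_constant_space},
\begin{equation}
\E\Big[\sup_{t\le T}\mathbf{\Pi}_n(t)^2\Big]\le 4\,n^{-\frac{2}{2\alpha-1}}\ell_2(n)^2\,\E[M(\tau_n(T))^2]=\frac{4\,\tau_n(T)}{\E[S]}\,n^{-\frac{2}{2\alpha-1}}\ell_2(n)^2=\frac{4T}{\E[S]}\,n^{\frac{\alpha-2}{2\alpha-1}}\,\ell_1(n)^{-3}.
\end{equation}
Because $\alpha\in(1,2)$ the exponent $\frac{\alpha-2}{2\alpha-1}$ is strictly negative, while $n\mapsto\ell_1(n)^{-3}$ is slowly varying, so the right-hand side tends to $0$. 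Markov's inequality, $\mathbb P\big(\sup_{t\le T}|\mathbf{\Pi}_n(t)|>\varepsilon\big)\le\varepsilon^{-2}\,\E\big[\sup_{t\le T}\mathbf{\Pi}_n(t)^2\big]$, then yields $\sup_{t\le T}|\mathbf{\Pi}_n(t)|\Pconv0$, which is the assertion.

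The argument is routine; the one step worth isolating is where $\alpha<2$ enters. The compensated Poisson process $M$ fluctuates on the diffusive scale $\sqrt{\tau_n(T)}\asymp n^{\alpha/(2(2\alpha-1))}$ (up to slowly varying factors), which must be weighed against the spatial normalisation $n^{1/(2\alpha-1)}$ of $\mathbf{N}_n$; the inequality $\tfrac{\alpha}{2(2\alpha-1)}<\tfrac{1}{2\alpha-1}$, equivalent to $\alpha<2$, is exactly what forces $\mathbf{\Pi}_n(\cdot)$ to be negligible. One could instead invoke Donsker's FCLT for the Poisson process together with the Continuous Mapping Theorem applied to the supremum functional, the decisive vanishing prefactor being the same; I prefer the martingale bound above because it is self-contained and quantitative.
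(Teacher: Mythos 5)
Your proof is correct, but it takes a different route from the paper. The paper's proof invokes the functional CLT for the Poisson process, $(\Pi(\tau_n(\cdot))-\tau_n(\cdot)/\E[S])/\sqrt{\tau_n(1)}\dconv B(\cdot)$, passes to a Skorokhod representation to turn this into uniform convergence in probability on $[0,T]$, and then observes that the actual normalisation $n^{1/(2\alpha-1)}\ell_2^{-1}(n)$ dominates $\sqrt{\tau_n(1)}=n^{\alpha/(2(2\alpha-1))}\ell_1^{1/2}(n)$, so the extra factor $1/k_n\to0$ crushes the limit. You replace the FCLT/representation step by a direct $L^2$ estimate: Doob's maximal inequality for the compensated Poisson martingale $M(s)=\Pi(s)-s/\E[S]$ gives $\mathbb E[\sup_{s\le\tau_n(T)}M(s)^2]\le 4\lambda\tau_n(T)$, and Chebyshev finishes. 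The two arguments hinge on exactly the same scaling comparison ($\alpha/(2(2\alpha-1))<1/(2\alpha-1)$, i.e.\ $\alpha<2$), but yours is more elementary and quantitative: it avoids the Skorokhod Representation Theorem entirely and yields an explicit rate $O(n^{(\alpha-2)/(2\alpha-1)}\ell_1(n)^{-3})$ for $\mathbb P(\sup_{t\le T}|\mathbf\Pi_n(t)|>\varepsilon)$, whereas the paper's route only gives convergence. One presentational caveat: you assert that $\Pi$ has rate $1/\E[S]=\lambda$; the paper is internally inconsistent on this point (it introduces $\Pi$ with rate $\lambda n$, later calls it rate one, but computes throughout with rate $\lambda$), and your convention is the one consistent with the compensator $\tau_n(t)/\E[S]$ in the definition of $\mathbf\Pi_n$ and with the LLN \eqref{eq:functional_LLN_applied_to_arrival_process}, so your variance computation $\mathbb E[M(s)^2]=s/\E[S]$ is the right one in context.
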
%
\begin{proof}%
By the FCLT for the Poisson process,
\begin{equation}%
\frac{\Pi(\tau_n(\cdot))-\tau_n(\cdot)\lambda}{\sqrt{\tau_n(1)}}\sr{\mathrm d}{\rightarrow} B(\cdot), \qquad \text{in}~(\mathcal D, U),
\end{equation}%
where $B(\cdot)$ is a standard Brownian motion, since $1/\E[S]=\lambda$ by the heavy-traffic assumption. By the Skorokhod Representation Theorem, this implies that 
\begin{equation}
\sup_{t\leq T}\Big\vert\frac{\Pi(\tau_n(t))-\tau_n(\cdot)/\E[S]}{\sqrt{\tau_n(1)}} - B(t) \Big\vert \sr{\mathbb P}{\rightarrow}0.
\end{equation}%
Moreover, for any $C>0$ and  $n$ large enough,
\begin{equation}%
C\sqrt{\tau_n(1)} = C n^{\frac{\alpha}{2(2\alpha-1)}}\ell_1^{1/2}(n) \leq n^{1/(2\alpha-1)}\ell_2^{-1}(n),
\end{equation}%
so that $k_n:=n^{1/(2\alpha-1)}\ell_2/\sqrt{\tau_n} \rightarrow\infty$ and
\begin{equation}\label{eq:poisson_process_converges_to_zero}%
\sup_{t\leq T}\Big\vert\frac{\Pi(\tau_n(t))-\tau_n(\cdot)/\E[S]}{n^{1/(2\alpha-1)}\ell_2^{-1}(n)}\Big\vert \leq \frac{1}{k_n} \sup_{t\leq T}\Big\vert\frac{\Pi(\tau_n(t))-\tau_n(t)/\E[S]}{\sqrt{\tau_n(1)}}-B(t)\Big\vert + \sup_{t\leq T} \Big\vert\frac{B(t)}{k_n}\Big\vert.
\end{equation}%
Since the right-hand side of \eqref{eq:poisson_process_converges_to_zero} converges in probability to zero as $n\rightarrow\infty$, the claim follows.
\end{proof}%
Next, we show convergence of the rescaled service process $\sigma(\cdot)$ to an $\alpha$-stable motion:
\begin{lemma}[Stable limit]\label{lem:stable_limit}%
As $n\rightarrow\infty$,
\begin{equation}\label{eq:stable_limit}%
\bm \sigma_n(\cdot)\sr{\mathrm d}{\rightarrow}s_{\alpha}\mathcal S(\cdot)\qquad \mathrm{in}~(\mathcal D, M_1),
\end{equation}%
where $s_{\alpha} = \E[S]^{-(\alpha+{1})/\alpha}$ and $\mathcal S(\cdot)$ is a spectrally positive $\alpha$-stable motion. 
\end{lemma}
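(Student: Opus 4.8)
The plan is to reduce the statement to the FCLT equivalence already recorded as Theorem \ref{th:FCLT_equivalence}, after properly matching the scaling constants. First I would unpack the definition of $\bm\sigma_n(\cdot)$ in \eqref{eq:pre_reflection_queue_split}: here $\bm\sigma_n(t) = n^{-\frac{1}{2\alpha-1}}\ell_2(n)\big(\tau_n(t)/\E[S]-\sigma(\tau_n(t))\big)$, which is exactly the rescaled renewal process of \eqref{eq:FCLT_equivalence_renewal_definition} run at time scale $\tau_n(1) = n^{\frac{\alpha}{2\alpha-1}}\ell_1(n)$ and with space-normalization $c=1/(n^{-\frac{1}{2\alpha-1}}\ell_2(n)) = n^{\frac{1}{2\alpha-1}}\ell_2^{-1}(n)$. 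So I would set $m = m(n) = \tau_n(1)$ and write $\bm\sigma_n(t) = -\,\bm\sigma^{(m)}(t)$ with the notation of the preliminaries, where the relevant space scaling $c_m$ must be checked to satisfy $c_m \sim n^{\frac{1}{2\alpha-1}}\ell_2^{-1}(n) = m^{1/\alpha}$ up to a slowly varying factor. This is precisely the content of the choice of $\ell_1,\ell_2$ in Section \ref{sec:determining_scaling_constants}: by \eqref{eq:form_scaling_constants} the fluctuations of $\sum_{i=1}^{\lfloor mt\rfloor} S_i$ are of order $\ell_0(m)m^{1/\alpha}$, and $k(n)=m(n)$ was chosen so that $\ell_0(m)m^{1/\alpha} = m^2/n$, i.e. $c_m = n^{\frac{1}{2\alpha-1}}\ell_2^{-1}(n)$ exactly. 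I would spell out this identification carefully, since it is the one genuinely bookkeeping-heavy part.

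Second, I would verify the hypotheses of Theorem \ref{th:FCLT_equivalence}. The $S_i$ are nonnegative by assumption, and with $c_n$ replaced by $c_m = n^{\frac{1}{2\alpha-1}}\ell_2^{-1}(n)$ and the ``$n$'' of that theorem replaced by $m = n^{\frac{\alpha}{2\alpha-1}}\ell_1(n)$, one has $c_m\to\infty$ and $m/c_m = n^{\frac{\alpha}{2\alpha-1}}\ell_1(n)\big/\big(n^{\frac{1}{2\alpha-1}}\ell_2^{-1}(n)\big) = n^{\frac{\alpha-1}{2\alpha-1}}\ell_1(n)\ell_2(n)\to\infty$ since $\alpha>1$. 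Assumption \eqref{eq:definition_alpha_stable_service_distribution} places $S$ in the domain of attraction of a spectrally positive $\alpha$-stable law, so by classical stable CLT (e.g. \cite[Theorem 4.5.1]{StochasticProcess}) the partial-sum convergence \eqref{eq:FCLT_equivalence_partial_sum_convergence_to_stable} holds with limit $\mathcal S(\cdot)$ a centered spectrally positive $\alpha$-stable motion; the normalization chosen makes the limit the standard such process. Theorem \ref{th:FCLT_equivalence} then yields $\bm\sigma^{(m)}(\cdot)\dconv -\E[S]^{-1}\mathcal S\circ\E[S]^{-1}\mathrm{id}(\cdot)$ in $(\mathcal D,M_1)$.

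Third, I would simplify the limit. Since $\mathcal S$ is $\alpha$-stable, $\mathcal S(\E[S]^{-1}t) \sr{d}{=} \E[S]^{-1/\alpha}\mathcal S(t)$ by self-similarity, so $-\E[S]^{-1}\mathcal S(\E[S]^{-1}t) \sr{d}{=} -\E[S]^{-1-1/\alpha}\mathcal S(t) = -s_\alpha\mathcal S(t)$ with $s_\alpha = \E[S]^{-(\alpha+1)/\alpha}$ as claimed. The overall sign works out because $\bm\sigma_n$ was defined with $\tau_n(t)/\E[S]-\sigma(\tau_n(t))$ (the opposite sign to $\bm\sigma^{(m)}$), giving $\bm\sigma_n(\cdot)\dconv s_\alpha\mathcal S(\cdot)$, and $\mathcal S$ spectrally positive is preserved. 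I would remark that the $M_1$ topology (rather than $J_1$) is essential here precisely because the renewal-process limit has positive jumps, as noted after Theorem \ref{th:FCLT_equivalence}. The main obstacle, such as it is, is not any deep probabilistic step — everything follows from the cited FCLT equivalence — but rather the careful matching of the three slowly varying functions $\ell,\ell_0,\ell_1,\ell_2$ and the two scaling exponents so that the theorem applies verbatim with the right constant; one has to be meticulous that $c_m$ as dictated by the domain-of-attraction condition genuinely coincides with the space scaling $n^{\frac{1}{2\alpha-1}}\ell_2^{-1}(n)$ built into $\bm\sigma_n$, which is exactly what equation \eqref{eq:form_scaling_constants} was set up to guarantee.
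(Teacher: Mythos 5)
Your proof is correct and follows essentially the same route as the paper: invoke the classical stable FCLT for the partial sums, transfer it to the renewal process via Theorem \ref{th:FCLT_equivalence}, and use self-similarity of $\mathcal S$ to identify the constant $s_\alpha=\E[S]^{-(\alpha+1)/\alpha}$. The only difference is that you spell out the scaling bookkeeping (the identification $c_m = n^{1/(2\alpha-1)}\ell_2^{-1}(n)$ via \eqref{eq:form_scaling_constants}) and the sign conventions explicitly, which the paper's terser proof leaves implicit.
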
%
\begin{proof}%
By classical results, the rescaled partial sums of $(S_i)_{i\geq1}$ converge to a spectrally positive $\alpha$-stable motion, see e.g. \cite{jacod2003limit} and \cite[Theorem 4.5.3]{StochasticProcess}. In particular \eqref{eq:FCLT_equivalence_partial_sum_convergence_to_stable} is satisfied. Theorem \ref{th:FCLT_equivalence} implies  \eqref{eq:FCLT_equivalence_renewal_convergence_to_stable}, that is
\begin{equation}%
\bm{\sigma}_n(\cdot) \sr{\mathrm{d}}{\rightarrow} \frac{1}{\E[S]}\mathcal S\Big(\frac{\cdot}{\E[S]}\Big)\qquad\mathrm{in}~(\mathcal D, M_1).
\end{equation}%
By standard properties of stable motion $(\mathcal S(c t))_{t\geq0} \sr{\mathrm d}{=} (c^{1/\alpha} \mathcal S(t))_{t\geq0}$ for $c>0$, so that the claim follows.
\end{proof}%
Although our results do not directly hold for $\alpha=2$ (finite variance case), it is still possible to substitute $\alpha=2$ in the formulas that we obtain, and what is obtained should be consistent with the previously found results for the finite-variance case. This is true, for example, for the coefficient of the stable motion in \eqref{eq:stable_limit}. Indeed, in \cite[Theorem 1]{bet2014heavy} it is proven that if $\E[S^2]=1$, the standard deviation of the limiting Brownian motion is $\lambda^{3/2}=\E[S]^{-3/2}$.

\subsection{Drift limit}\label{sec:drift_limit}
The most difficult task in proving Theorem \ref{th:scaling_limit_queue_length} is to deal with the complicated drift $\mathbf R_n(\cdot)$ in \eqref{eq:pre_reflection_queue_split}. We will prove the following result:
\begin{lemma}[Drift limit]\label{lem:drift_convergence}%
Under the same assumptions as in Theorem \ref{th:scaling_limit_queue_length}, as $n\rightarrow\infty$,
\begin{equation}%
\sup_{t\leq T}\big\vert\mathbf R _n(t) - \frac{\lambda^2}{2}t^2\big\vert \sr{\mathbb P}{\rightarrow}0.
\end{equation}%
\end{lemma}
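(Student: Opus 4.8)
The plan is to analyze $\mathbf R_n(t) = n^{-\frac{1}{2\alpha-1}}\ell_2(n) R_n(\tau_n(t))$, where $R_n(\cdot)$ counts repeated marks in the thinned marked Poisson process of \eqref{eq:definition_arrival_process}. The starting point is the observation already hinted at in \eqref{eq:drift_lower_bound_computations}: conditioned on $\Pi(\cdot)$, the process $R_n(\cdot)$ can be built up one Poisson point at a time, and when the $k$-th Poisson point arrives (so $k-1$ have been seen, with some number $j \le k-1$ of distinct marks among them), it is a repeat with probability $j/n$. The dominant behaviour is captured by pretending all previous marks are distinct, giving an increment probability $(k-1)/n$; summing over the $\Pi(\tau_n(t))$ points gives a main term $\Pi(\tau_n(t))^2/(2n)$. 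Since $\Pi(\tau_n(t)) = \lambda n \tau_n(t)(1+\oP(1))$ by the LLN for the Poisson process, and $\tau_n(t) = t n^{\frac{\alpha}{2\alpha-1}}\ell_1(n)$, one checks that $n^{-\frac{1}{2\alpha-1}}\ell_2(n)\cdot \Pi(\tau_n(t))^2/(2n) \to \frac{\lambda^2}{2}t^2$, using $\ell_2 = \ell_1^{-2}$; this is exactly the scaling calibration performed in Section \ref{sec:determining_scaling_constants}.

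The key steps, in order, are: (i) write $R_n(t) = \sum_{k=1}^{\Pi(t)} \xi_k$ where $\xi_k$ is the indicator that the $k$-th Poisson point is a repeat, and decompose $\xi_k = \zeta_k + (\xi_k - \zeta_k)$ where $\zeta_k$ is a Bernoulli$\big(\tfrac{k-1}{n}\big)$ variable coupled to $\xi_k$ (since the true repeat probability $j/n$ is stochastically dominated by $(k-1)/n$, one can couple so that $\xi_k \le \zeta_k$ and $\zeta_k - \xi_k$ is controlled by the number of already-repeated marks); (ii) show $\sum_{k=1}^{\Pi(\tau_n(t))} \E[\zeta_k \mid \Pi] = \Pi(\tau_n(t))^2/(2n) + O(\Pi(\tau_n(t))/n)$ and that after rescaling the error is $\oP(1)$; (iii) control the martingale fluctuations $\sum_k (\zeta_k - \E[\zeta_k\mid\mathcal F_{k-1}])$ uniformly in $t\le T$ via Doob's maximal inequality, the conditional variance being bounded by $\sum_k \tfrac{k-1}{n} = O(\Pi(\tau_n(t))^2/n)$, which after multiplying by $(n^{-\frac{1}{2\alpha-1}}\ell_2(n))^2$ tends to zero (here the exponent $\tfrac{1}{2\alpha-1}<1$ does the work); (iv) bound the coupling discrepancy $\sum_k(\zeta_k-\xi_k)$ by noting it is at most the number of points carrying a mark seen at least twice before, i.e. essentially $R_n(\tau_n(t))$ itself times a vanishing factor, or more carefully bound $\E[\zeta_k - \xi_k \mid \mathcal F_{k-1}] \le (\text{number of repeated marks so far})/n \le R_n(\tau_n(\cdot))/n$, and close the resulting self-bounding inequality to conclude the discrepancy is $\oP(n^{\frac{1}{2\alpha-1}}\ell_2^{-1}(n))$; (v) upgrade pointwise-in-$t$ control to uniform-in-$t$ control using monotonicity of $R_n$ and of the limit $\tfrac{\lambda^2}{2}t^2$ together with a standard finite-grid argument (a Pólya–Cantelli / Dini-type lemma: monotone functions converging pointwise to a continuous limit converge uniformly on compacts).

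I expect step (iv), controlling the coupling error between the true repeat probability $j/n$ and the surrogate $(k-1)/n$, to be the main obstacle. The difference is governed by $k-1-j$, the number of \emph{duplicate} observations so far, and bounding this requires a bootstrap: one first shows $R_n(\tau_n(t))$ is $O_{\mathbb P}$ of the right order from the upper coupling, then feeds that back to show the correction $j/n$ versus $(k-1)/n$ changes the sum only at lower order, and possibly iterates once more to get the error down to genuinely $\oP$ of the scaling. One has to be somewhat careful that these bounds are uniform over $k \le \Pi(\tau_n(T))$ and that the randomness of $\Pi$ (which is not independent of $R_n$) is handled, e.g. by first conditioning on $\Pi$ and using that $\Pi(\tau_n(T)) \le 2\lambda n\tau_n(T)$ with high probability. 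Everything else is routine: the main term computation is a deterministic calculation once the scaling constants from \eqref{eq:scaling_constant_time}–\eqref{eq:scaling_constant_space} are plugged in, and the fluctuation bounds are textbook martingale estimates.
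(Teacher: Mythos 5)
Your proposal is correct and follows essentially the same route as the paper: the surrogate Bernoulli$(\tfrac{k-1}{n})$ coupling is the paper's upper bound $\tilde R^{(\mathrm{up})}_n$, the Doob $L^2$ maximal inequality handles the fluctuations, the functional LLN for $\Pi$ gives the $\tfrac{\lambda^2}{2}t^2$ main term, and your ``self-bounding bootstrap'' for the coupling discrepancy is exactly the paper's lower bound $\tilde R^{(\mathrm{low})}_n$, obtained by feeding $\tilde R^{(\mathrm{up})}_n$ back into the recursion (one iteration suffices, since $n^{\alpha/(2\alpha-1)-1}\ell_1(n)\to 0$). The only cosmetic difference is your step (v): the Dini-type argument is not needed because the martingale and LLN bounds are already uniform in $t\le T$.
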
%
The proof will use upper and lower bounds for a distributionally equivalent characterization of $R_n(\cdot)$. First, note that the probability of extracting a mark that has already appeared is $D_n(i-1)/n$, where $D_n(i)$ denotes the number of \emph{different} marks seen up to the  $i$-th arrival epoch in $\Pi(\cdot)$. Therefore, the thinning procedure can be represented by a Bernoulli random variable with parameter $D_n(i-1)/n$. Since at time $t$ there have been a total of $\Pi(t)$ points, we have
\begin{equation}\label{eq:drift_definition}%
R_n(t) \sr{\mathrm d}{=} \sum_{i\leq \Pi(t)} \mathds 1_{\{U_i\leq\frac{D_n(i-1)}{n}\}},
\end{equation}%
where $(U_i)_{i\geq1}$ are uniformly distributed (on $[0,1]$) random variables, independent of all other randomness, and $\mathds 1_{\{U_i\leq x\}}$ is distributed as a Bernoulli random variable with parameter $x$. Moreover, $D_n(i)$ can be written as
\begin{equation}%
D_n(i) = i - Z_{i},
\end{equation}%
where $Z_{i}$ is the number of \emph{repeated} marks seen up to the time of the $i$-th arrival. In other words we have the crucial relation
\begin{equation}
D_n(i) \sr{\mathrm d}{=} i - R_n(\Pi^{-1}(i)),
\end{equation}%
where $\Pi^{-1}(i)$ is the arrival time of the $i$-th customer.
\begin{figure}[!hbt]%
\centering
	\includegraphics{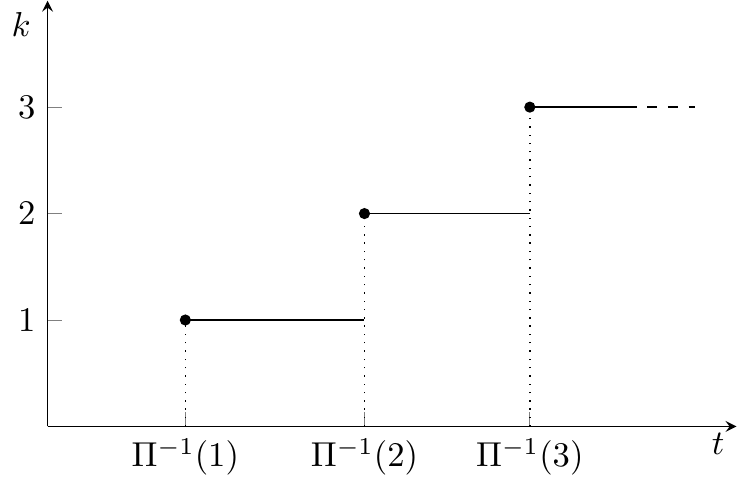}%
	%\includestandalone[mode=image|tex]{Tikz_pictures/arrival_process}
\caption{A sample path of the process $\Pi(\cdot)$.}
\end{figure}%

Exploiting these ideas, we can recursively construct a process $(\R_n(k))_{k\geq0}$
% Let $
%(\Omega,\mathcal F, \mathbb P)$ be a probability space on which the sequences of random variables $(S_i)_{i\geq1}$, $(T_i)_{i\geq 1}$ and $(U_i)_{i\geq 1}$ are defined. Here $(S_i)_{i\geq1}$ represent the service times, $(T_i)_{i\geq 1}$ are the (exponential) holding times of a Poisson process $S(\cdot)$ with parameter $\lambda$ and $(U_i)_{i\geq 1}$ is a sequence of independent random variables such that $U_i\sim U([0,1])$ for all $i$. We iteratively define a discrete-time process
 with ${\R}_n(0) := 0$ and
\begin{align}\label{eq:recursive_drift_definition}%
{\R}_n(k) := \sum_{i=1}^k \mathds 1 _{\{ U_i \leq \frac{i-1- \R_n(i-1)}{n} \}},\qquad k\geq1.
\end{align}%
Then
\begin{equation}\label{eq:drift_through_recursive_drift_definition}%
R_n(t) \sr{\mathrm d}{=} \R_n(\Pi(t)).
\end{equation}%
As already mentioned, the processes $R_n(\cdot)$ and $\Pi(\cdot)$ are \emph{not} independent. The distributional equality \eqref{eq:drift_through_recursive_drift_definition} reveals the dependency of $R_n(\cdot)$ on the process $\Pi(\cdot)$.

 The next step is to construct an upper and a lower bound on ${\R}_n(k)$. Since ${\R}_n(k) \geq 0$, the upper bound is trivially
\begin{align}%
\mathds 1 _{\{ U_i \leq (i-1- \R_n(i-1))/n \}} \leq \mathds 1 _{\big\{ U_i \leq \frac{ i-1}{n} \big\}},
\end{align}%
so that, almost surely, 
\begin{align}\label{eq:upper_bound_drift}%
 \R _n (k) \leq  \R^{(\mathrm{up})} _n (k):= \sum_{i=1}^k \mathds 1 _{\big\{ U_i \leq\frac{i-1}{n} \big\}}.
\end{align}%
The lower bound is more involved. By \eqref{eq:upper_bound_drift},
\begin{align}%
\mathds 1 _{\{ U_i \leq \frac{i-1- \R_n(i-1)}{n} \}} \geq \mathds 1_ {\big\{U_i \leq \frac{i-1 -  \R^{(\mathrm{up})} _n(i-1)}{n}\big\}} ,
\end{align}%
so that
\begin{align}%
 \R _n (k)\geq \R^{(\mathrm{low})} _n (k):=\sum_{i=1}^k \mathds 1_ {\big\{U_i \leq\frac{ i-1 -  \R^{(\mathrm{up})} _n(i-1)}{n}\big\}} . 
\end{align}%
We have then constructed a coupling such that \emph{for all} $t\geq 0$,
\begin{equation}\label{eq:two_sided_bound_drift}%
 R^{(\mathrm{low})} _n (t)\preceq R_n(t) \preceq  R^{(\mathrm{up})} _n (t),
\end{equation}%
where $R^{(\mathrm{low})} _n (t):= \tilde R_n^{(\mathrm{low})}(\Pi(t))$ and $R^{(\mathrm{up})} _n(t):= \tilde R_n^{(\mathrm{up})}(\Pi(t))$. For the next and last step we prove uniform convergence of the upper and lower bounds to the same limit. This is done in the following two sections.

%
%\begin{lemma}%
%Let $(\lambda_n)_{n\geq1}$ be a sequence of (possibly random) intensities and define $\mu_n := \mathbb E[\lambda_n]$ and $\sigma_n^2 := \mathrm{Var}(\lambda_n)$. Assume $\mu_n\rightarrow\infty$ and $\sigma_n^2\rightarrow\infty$. Denote as $N(\lambda)$ a Poisson process with parameter $\lambda$. Let $(X_{i,n})_{i\in[n]}$ be a sequence of random variables such that $n^{-a}\sum_{i\leq \mu_n + \sigma_n^{1+\varepsilon}} X_{i,n}\sr{\mathbb P}{\rightarrow}\bar X$, for some $a>0$ and non-trivial $\bar X$. Then,
%%
%\begin{equation}\label{eq:random_sums_lemma_claim}%
%n^{-a}\sum_{i\leq  N(\lambda_n)} X_{i,n}\sr{\mathbb P}{\rightarrow}\bar X.
%\end{equation}%
%%
%\end{lemma}%
%%
%\begin{proof}%
%By our assumptions, the claim \eqref{eq:random_sums_lemma_claim} can be rewritten as
%%
%\begin{equation}%
%\bigg\vert n^{-a}\sum_{i\leq N(\lambda_n)}X_{i,n} - n^{-a}\sum_{i\leq \mu_n+\sigma_n^{1+\varepsilon}}X_{i,n} \bigg\vert = o_{\mathbb P}(1).
%\end{equation}%
%%
%Since
%%
%\begin{align}\label{eq:small_index_set_estimate}%
%\mathbb P\left(\vert N(\lambda_n) - \mu_n\vert\geq \sigma_n^{1+\varepsilon}\right)\leq \frac{\sigma_n^2}{\sigma^{2(1+\varepsilon)}_n} = \frac{1}{\sigma_n^{2\varepsilon}},
%\end{align}%
%%
%we conclude that $ N(\lambda_n) \leq \mu_n + \sigma_n^{1+\varepsilon}$ w.h.p.
%
%
%or equivalently 
%%
%\begin{equation}%
%\Big\vert n^{-a}\sum_{i\in \{ N(\lambda_n),\ldots,\lambda_n\}}X_{i,n}\Big\vert = o_{\mathbb P}(1).
%\end{equation}%
%%
%
%\end{proof}%
%%
\subsubsection{Upper bound}
Define the quantity to be estimated as
\begin{align}\label{eq:drift_lower_bound_beginning}%
U_n(T):=\sup_{t\leq T}\Big\vert n^{-1/(2\alpha-1)}\ell_2(n)R^{(\mathrm{up})} _n (\tau_n(t))-\frac{\lambda^2}{2}t^2 \Big\vert,
\end{align}%
We will prove the following:
\begin{lemma}[Upper bound converges to zero]\label{lem:drift_upper_bound}%
Under the assumptions of Theorem \ref{th:scaling_limit_queue_length}, as $n\rightarrow\infty$,
\begin{equation}%
U_n(T)\sr{\mathbb P}{\rightarrow} 0,
\end{equation}%
for every fixed $T>0$.
\end{lemma}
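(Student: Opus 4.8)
The plan is to analyze the explicit sum $\tilde R_n^{(\mathrm{up})}(k) = \sum_{i=1}^k \mathds 1_{\{U_i \le (i-1)/n\}}$ directly, since it is now a sum of \emph{independent} Bernoulli variables with parameters $(i-1)/n$, and then compose with $\Pi(\cdot)$. First I would compute the mean: $\E[\tilde R_n^{(\mathrm{up})}(k)] = \sum_{i=1}^k (i-1)/n = k(k-1)/(2n) \sim k^2/(2n)$. Substituting $k = \Pi(\tau_n(t))$ and using that $\Pi(\tau_n(t)) = \lambda n \tau_n(t)(1+\oP(1))$ uniformly on $[0,T]$ (which follows from the FCLT for the Poisson process, exactly as in Lemma~\ref{lem:poisson_term_is_negligible}), the mean of $\tilde R_n^{(\mathrm{up})}(\Pi(\tau_n(t)))$ is asymptotically $(\lambda n \tau_n(t))^2/(2n) = \lambda^2 n \tau_n(t)^2/2$. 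Recalling $\tau_n(t) = t\, n^{\alpha/(2\alpha-1)}\ell_1(n)$ and the space scaling $n^{-1/(2\alpha-1)}\ell_2(n)$ with $\ell_2 = \ell_1^{-2}$, the product $n^{-1/(2\alpha-1)}\ell_2(n)\cdot \lambda^2 n \tau_n(t)^2/2$ collapses to $\tfrac{\lambda^2}{2}t^2$ — this is precisely how the scaling constants in Section~\ref{sec:determining_scaling_constants} were chosen, so the centering is correct by design.

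The substance of the argument is then a uniform (in $t\le T$) concentration estimate: I would show that $n^{-1/(2\alpha-1)}\ell_2(n)\big(\tilde R_n^{(\mathrm{up})}(\Pi(\tau_n(t))) - \E[\cdots]\big) \Pconv 0$ uniformly. Here I would split into two pieces. The first is the fluctuation of $\tilde R_n^{(\mathrm{up})}(k)$ around its mean for \emph{deterministic} $k$ of order $\tau_n(1)$: since it is a sum of independent Bernoullis, $\mathrm{Var}(\tilde R_n^{(\mathrm{up})}(k)) \le \E[\tilde R_n^{(\mathrm{up})}(k)] = O(k^2/n) = O(\tau_n(1)^2/n) = O(n^{1/(2\alpha-1)}\ell_1^2(n))$, whence the standard deviation is $O(n^{1/(2(2\alpha-1))}\ell_1(n))$, which is of strictly smaller order than the space normalization $n^{1/(2\alpha-1)}\ell_2^{-1}(n)=n^{1/(2\alpha-1)}\ell_1^2(n)$; a maximal inequality (Doob/Kolmogorov for the martingale $\tilde R_n^{(\mathrm{up})}(k) - \E[\tilde R_n^{(\mathrm{up})}(k)]$, or Etemadi's inequality) upgrades this to a uniform-in-$k$ bound over $k \le (1+\varepsilon)\lambda n\tau_n(T)$. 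The second piece is the error from replacing the random index $\Pi(\tau_n(t))$ by its deterministic proxy $\lambda n\tau_n(t)$ inside the smooth (quadratic) mean function; here the modulus of continuity of $k\mapsto k^2/(2n)$ on the relevant range together with $\sup_{t\le T}|\Pi(\tau_n(t)) - \lambda n\tau_n(t)| = O_{\mathbb P}(\sqrt{\tau_n(1)})$ gives a contribution of order $n^{-1/(2\alpha-1)}\ell_2(n)\cdot \tau_n(1)\cdot\sqrt{\tau_n(1)}/n$, which again vanishes. Combining, $U_n(T)\Pconv 0$.

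The main obstacle is handling the composition with the random time change $\Pi(\tau_n(\cdot))$ uniformly in $t$: one must control $\tilde R_n^{(\mathrm{up})}$ not at a single random point but simultaneously over a random range of indices, and the two sources of randomness ($U_i$ and $\Pi$) are intertwined through the index. I would resolve this by first passing to a high-probability event $\{\sup_{t\le T}\Pi(\tau_n(t)) \le (1+\varepsilon)\lambda n\tau_n(T)\}$, on which the relevant index range is deterministic, then applying the maximal inequality for the Bernoulli martingale over that fixed range (which dominates the supremum over the actual random range by monotonicity of $k\mapsto \tilde R_n^{(\mathrm{up})}(k)$), and finally absorbing the time-change error via the smoothness of the quadratic. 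Everything else is bookkeeping with slowly varying functions, using $\ell_2 = \ell_1^{-2}$ and the cancellations built into \eqref{eq:form_scaling_constants}.
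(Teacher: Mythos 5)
Your proposal is correct and follows essentially the same route as the paper: split off the compensator $\sum_{i\le \Pi(\tau_n(t))}(i-1)/n \approx \Pi(\tau_n(t))^2/(2n)$, identify its limit $\frac{\lambda^2}{2}t^2$ via the LLN for the Poisson process and continuity of $x\mapsto x^2$, and control the centered fluctuations by first passing to the high-probability event $\{\Pi(s)\le(\lambda+\varepsilon)s\}$ so that the random index range is dominated by a deterministic one, then applying Doob's $L^2$ maximal inequality to the martingale of partial sums of the independent centered Bernoullis. The only blemish is a normalization slip — writing $\Pi(\tau_n(t))\approx \lambda n\,\tau_n(t)$ would give $\lambda^2 t^2 n^2/2$ after rescaling rather than the claimed $\frac{\lambda^2}{2}t^2$; with the rate-$\lambda$ convention actually used in the paper's estimates (cf.\ \eqref{eq:functional_LLN_applied_to_arrival_process}) your bookkeeping goes through exactly as described.
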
%
\begin{proof}
The absolute value in \eqref{eq:drift_lower_bound_beginning} can be split as
\begin{align}\label{eq:drift_lower_bound_computations}%
U_n(T)&\leq\Big\vert n^{-\frac{1}{2\alpha-1}}\ell_2(n)\sum_{i\leq \Pi(\tau_n(t))}\Big( \mathds 1 _{\{ U_i \leq \frac{i-1}{n} \}} -\frac{i-1}{n}\Big)\Big\vert + \Big\vert n^{-\frac{1}{2\alpha-1}}\ell_2(n)\sum_{i\leq \Pi(\tau_n(t))}\frac{i-1}{n} -\frac{\lambda^2}{2}t^2 \Big\vert\nnl
&\leq \Big\vert n^{-\frac{1}{2\alpha-1}}\ell_2(n)\sum_{i\leq\Pi(\tau_n(t))}\Big( \mathds 1 _{\{ U_i \leq \frac{i-1}{n} \}} -\frac{i-1}{n}\Big)\Big\vert + \Big\vert \frac{\Pi(\tau_n(t))^2}{2n^{2\alpha/(2\alpha-1)}\ell_2^{-1}(n)}-\frac{\lambda^2}{2}t^2 \Big\vert +\varepsilon_n,
\end{align}%
where $\varepsilon_n = \vert\Pi(\tau_n(t))/2n\vert$ is an error term. By the functional strong LLN for the Poisson process
\begin{equation}\label{eq:functional_LLN_applied_to_arrival_process}%
\frac{\Pi(t n^{\alpha/(2\alpha-1)}\ell_1(n))}{n^{{\alpha}/{(2\alpha-1)}}\ell_2^{-1/2}(n)}\sr{\mathrm{a.s.}}{\rightarrow} \lambda t,\qquad \text{in}~(\mathcal D,U).
\end{equation}%
It is worth noting that we have made explicit use of the specific form of the scaling functions $\ell_1(\cdot)$ and $\ell_2(\cdot)$ as determined in \eqref{eq:form_scaling_constants} and below. More specifically, by definition $\ell_1^{-2}(n) = \ell_2(n)$.
Moreover, the functional  $x\mapsto x^2$ from $\mathcal D([0,T])$ to itself is almost surely continuous in $f(t) = \lambda t$ in the uniform topology. 
This implies that the second and third terms in \eqref{eq:drift_lower_bound_computations} converge to zero uniformly for $t\leq T$ as $n\rightarrow\infty$.

By the LLN for the Poisson process  $\Pi(s) \leq (\lambda + \varepsilon)s$ w.h.p.~for $s = O(n^{\alpha/(2\alpha-1)})$. 
The sum in the first term in \eqref{eq:drift_lower_bound_computations} can then be bounded on the event $\{\Pi(s)\leq (\lambda + \varepsilon)s\}$ as
\begin{align}\label{eq:drift_lower_bound_martingale_sup}%
\sup_{s\leq \tau_n(T)}&\Big\vert\sum_{i\leq \Pi(s)}\Big( \mathds 1 _{\{ U_i \leq \frac{i-1}{n} \}} -\frac{i-1}{n}\Big)\Big\vert
&\leq \sup_{s\leq (\lambda+\varepsilon)\tau_n(T)}\Big\vert\sum_{i\leq \lfloor s\rfloor}\Big( \mathds 1 _{\{ U_i \leq \frac{i-1}{n} \}} -\frac{i-1}{n}\Big)\Big\vert
\end{align}%
This can be recognized as the supremum of a martingale.  In the following and future computations we shall denote $\bar T := T (\lambda + \varepsilon)$. Then,  an application of Doob's martingale $L^2$ inequality \cite[Theorem 11.2]{klenke2008probability} gives
\begin{align}%
\mathbb P\Big( \sup_{s\leq \bar T n^{\alpha/(2\alpha-1)}\ell_1(n)}\Big\vert\sum_{i\leq \lfloor s\rfloor}&\Big( \mathds 1 _{\{ U_i \leq \frac{i-1}{n} \}} - \frac{i-1}{n}\Big)\Big\vert \geq \varepsilon n^{\frac{1}{2\alpha-1}}\ell_2^{-1}(n)\Big)\nnl
&\leq \sum_{i\leq \bar T n^{\frac{\alpha}{2\alpha-1}}\ell_1(n)}\frac{\mathbb E[( \mathds 1 _{\{ U_i \leq \frac{i-1}{n} \}} - \frac{i-1}{n})^2]}{\varepsilon^2n^{\frac{2}{2\alpha-1}}\ell_2^{-2}(n)}\nnl
&=  \frac{1}{\varepsilon^2 n^{\frac{2}{2\alpha-1}}\ell_2^{-2}(n)}\sum_{i\leq \bar T n^{\frac{\alpha}{2\alpha-1}}\ell_1(n)-1}\Big(\frac{i}{n}-\frac{i^2}{n^2}\Big)\nnl
&\leq\frac{\bar T ^2 n^{\frac{2\alpha}{2\alpha-1}}\ell_1^2(n)}{\varepsilon^2 n^{\frac{2\alpha+1}{2\alpha-1}}\ell_2^{-2}(n)} = O(n^{-\frac{1}{2\alpha-1}}\ell_2(n)),
\end{align}%
and this implies that the right-hand side of \eqref{eq:drift_lower_bound_martingale_sup} is $\oP(n^{1/(2\alpha-1)}\ell_2^{-1}(n))$.
\end{proof}
\subsubsection{Lower bound}
By \eqref{eq:two_sided_bound_drift} we also have 
\begin{equation}%
 R_n(t) \succeq R_n^{(\mathrm{low})}=\sum_{i=1}^{\Pi(t)} \mathds 1_ {\{U_i \leq (i-1 - \tilde R^{(\mathrm{up})} _n(i-1))/n\}}.
\end{equation}%
%
%The bound corresponds to the worst-case scenario in which, at each time $j\leq i$, the probability of extracting a repeated mark is maximized because at previous times $l=1,\ldots, j-1$ all different marks were observed.
Consequently, we now estimate
\begin{align}\label{eq:drift_upper_bound_beginning}%
L_n(T) :=\sup_{t\leq T}\Big\vert  n^{-1/(2\alpha-1)}\ell_2(n)R^{(\mathrm{low})} _n (\tau_n(t)) - \frac{\lambda^2}{2}t^2 \Big\vert.
\end{align}%
\begin{lemma}[Lower bound converges to zero]\label{lem:drift_lower_bound}%
Under the same assumptions as in Theorem \ref{th:scaling_limit_queue_length}, as $n\rightarrow\infty$,
\begin{equation}%
L_n(T)\sr{\mathbb P}{\rightarrow} 0,
\end{equation}%
for every fixed $T>0$.
\end{lemma}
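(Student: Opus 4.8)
The plan is to follow the template of the proof of Lemma~\ref{lem:drift_upper_bound}: decompose $\R^{(\mathrm{low})}_n$ into a martingale plus its compensator, show the martingale part is negligible via Doob's $L^2$ inequality, and identify the compensator with the parabolic drift by comparing it with the compensator of the upper bound. As there, fix $\varepsilon>0$ and carry out all estimates on the high-probability event $\mathcal E_n:=\{\Pi(s)\le(\lambda+\varepsilon)s\text{ for all }s\le\tau_n(T)\}$ (using the functional LLN for $\Pi(\cdot)$ exactly as around \eqref{eq:functional_LLN_applied_to_arrival_process}), on which any random upper limit $\Pi(\tau_n(t))$ with $t\le T$ may be replaced by the deterministic bound $(\lambda+\varepsilon)\tau_n(T)$ inside a supremum over $t$. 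Recall that $R^{(\mathrm{low})}_n(\tau_n(t))=\R^{(\mathrm{low})}_n(\Pi(\tau_n(t)))$.

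Writing $\mathcal F_k:=\sigma(U_1,\dots,U_k)$ and $p_i:=(i-1-\R^{(\mathrm{up})}_n(i-1))/n$, observe that $\R^{(\mathrm{up})}_n(i-1)$ is $\mathcal F_{i-1}$-measurable, hence so is $p_i$, and $\E[\mathds 1_{\{U_i\le p_i\}}\mid\mathcal F_{i-1}]=p_i$. Therefore
\[
\R^{(\mathrm{low})}_n(k)=\sum_{i=1}^k\big(\mathds 1_{\{U_i\le p_i\}}-p_i\big)+\sum_{i=1}^k p_i,
\]
and the first sum $M_n(k)$ is a martingale whose increments have conditional variance $p_i(1-p_i)\le p_i\le(i-1)/n$ --- exactly the bound driving \eqref{eq:drift_lower_bound_martingale_sup}--\eqref{eq:drift_lower_bound_computations}. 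On $\mathcal E_n$ one has $\sup_{t\le T}|M_n(\Pi(\tau_n(t)))|\le\sup_{k\le(\lambda+\varepsilon)\tau_n(T)}|M_n(k)|$, so Doob's $L^2$ inequality, applied exactly as in Lemma~\ref{lem:drift_upper_bound}, yields $\sup_{k\le(\lambda+\varepsilon)\tau_n(T)}|M_n(k)|=\oP(n^{1/(2\alpha-1)}\ell_2^{-1}(n))$; the martingale part is thus negligible after multiplication by $n^{-1/(2\alpha-1)}\ell_2(n)$.

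It then remains to handle the compensator, which I would split as $\sum_{i\le\Pi(\tau_n(t))}p_i=\sum_{i\le\Pi(\tau_n(t))}\frac{i-1}{n}-\frac1n\sum_{i\le\Pi(\tau_n(t))}\R^{(\mathrm{up})}_n(i-1)$. The first term is precisely the one treated in the proof of Lemma~\ref{lem:drift_upper_bound}, whose rescaled version converges uniformly on $[0,T]$ to $\frac{\lambda^2}{2}t^2$ by \eqref{eq:functional_LLN_applied_to_arrival_process} and continuity of $x\mapsto x^2$. For the correction I would use monotonicity of $k\mapsto\R^{(\mathrm{up})}_n(k)$ together with the uniform bound extracted from the proof of Lemma~\ref{lem:drift_upper_bound}, namely $\R^{(\mathrm{up})}_n(m)\le\frac{m^2}{2n}+\oP(n^{1/(2\alpha-1)}\ell_2^{-1}(n))=\OP(n^{1/(2\alpha-1)}\ell_2^{-1}(n))$ uniformly for $m\le(\lambda+\varepsilon)\tau_n(T)$ (here $\tau_n(T)^2/n=T^2n^{1/(2\alpha-1)}\ell_2^{-1}(n)$). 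On $\mathcal E_n$ this gives
\[
\frac1n\sup_{t\le T}\sum_{i\le\Pi(\tau_n(t))}\R^{(\mathrm{up})}_n(i-1)\le\frac{(\lambda+\varepsilon)\tau_n(T)}{n}\,\OP\big(n^{\frac{1}{2\alpha-1}}\ell_2^{-1}(n)\big)=\OP\big(n^{\frac{2-\alpha}{2\alpha-1}}\ell_1(n)\ell_2^{-1}(n)\big),
\]
so that after multiplying by $n^{-1/(2\alpha-1)}\ell_2(n)$ the correction is $\OP(n^{(1-\alpha)/(2\alpha-1)}\ell_1(n))$, which converges to zero in probability since $\alpha>1$. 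Combining the three contributions gives $L_n(T)\sr{\mathbb P}{\rightarrow}0$.

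The argument is essentially a transcription of the upper-bound proof, so I do not expect a conceptual obstacle; the point requiring care is the order bookkeeping in the last display --- the correction term is of strictly smaller polynomial order than the outer normalization precisely because the exponent $\frac{1-\alpha}{2\alpha-1}$ is negative, which is exactly where the restriction $\alpha\in(1,2)$ enters here --- together with checking that replacing $\Pi(\tau_n(t))$ by $(\lambda+\varepsilon)\tau_n(T)$ and using the monotonicity of $\R^{(\mathrm{up})}_n$ are both valid uniformly in $t\le T$.
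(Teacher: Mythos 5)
Your proposal is correct and follows essentially the same route as the paper: the same martingale-plus-compensator decomposition of $\R^{(\mathrm{low})}_n$, Doob's $L^2$ inequality with the variance bound $p_i(1-p_i)\le i/n$ for the martingale part, reuse of the upper-bound analysis for $\sum_{i\le\Pi(\tau_n(t))}(i-1)/n$, and the same monotonicity bound on the correction term $\frac1n\sum_i\R^{(\mathrm{up})}_n(i-1)$, whose negligibility rests on the exponent $\frac{1-\alpha}{2\alpha-1}<0$, exactly as in the paper's treatment of the third term in \eqref{eq:drift_upper_bound_computations}. Your explicit verification that $p_i$ is $\mathcal F_{i-1}$-measurable is a small point the paper leaves implicit, but there is no substantive difference.
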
%
\begin{proof}
Similarly as before, conditioned on the event $\{\Pi(s)\leq (\lambda + \varepsilon)s\}$,
\begin{align}\label{eq:drift_upper_bound_computations}%
L_n(T) &\leq \sup_{s\leq\tau_n(\bar T)}\Big\vert n^{-\frac{1}{2\alpha-1}}\ell_2(n)\sum_{i\leq \lfloor s \rfloor}\Big(\mathds 1_ {\{U_i \leq \frac{i-1 - \tilde R^{(\mathrm{up})} _n(i-1)}{n}\}} -\frac{i-1 - \tilde R^{(\mathrm{up})} _n(i-1)}{n}\Big)\Big\vert\\
&\quad + \sup_{t\leq T}\Big\vert n^{-\frac{1}{2\alpha-1}}\ell_2(n)\sum_{i\leq \Pi(\tau_n(t))}\frac{i-1}{n}-\frac{\lambda^2}{2} t^2\Big\vert + \sup_{s\leq \tau_n(\bar T)}\Big\vert n^{-\frac{1}{2\alpha-1}}\ell_2(n)\sum_{i\leq \lfloor s\rfloor}\frac{\tilde R^{(\mathrm{up})} _n(i-1)}{n}\Big\vert\notag.
\end{align}%

The first term in \eqref{eq:drift_upper_bound_computations} can also be bounded as before, since it is again the supremum of a martingale. Indeed, denote $Y_n(i):= (i-1 - \tilde R^{(\mathrm{up})} _n(i-1))/n$ for convenience. By Doob's martingale inequality
\begin{align}\label{eq:drift_upper_bound_martingale_part_computations}%
\varepsilon^2 n^{\frac{2}{2\alpha-1}}&\ell_2^{-2}(n)\mathbb P\Big(\sup_{s\leq \tau_n(\bar T)}\Big\vert\sum_{i\leq \lfloor s\rfloor}\big(\mathds 1_{\{U_i\leq Y_n(i)\}} -Y_n(i)\big)\Big\vert\geq \varepsilon n^{\frac{1}{2\alpha-1}}\ell_2^{-1}(n)\Big) \nnl
&\leq \mathbb E\Big [ \Big(\sum_{i\leq \tau_n(\bar T)}\mathds 1_{\{U_i\leq Y_n(i)\}} -Y_n(i)\Big)^2\Big]= \sum_{i\leq \tau_n(\bar T)}\mathbb E\Big[ \big( \mathds 1_{\{U_i\leq Y_n(i)\}} -Y_n(i)\big)^2\Big].
\end{align}%
%
%The quadratic term in \eqref{eq:drift_upper_bound_martingale_part_computations} can be handled as follows
%
%\begin{lemma}%
%With notation as above,
%%
%\begin{equation}\label{eq:variance_mixed_bernoulli_o_small_lemma_claim}%
%\mathbb E\Big[ \big( \mathds 1_{\{U_k\leq X_n(k)\}} -X_n(k)\big)^2\Big] = o(n^{\frac{2 - \alpha}{2\alpha-1}}),
%\end{equation}%
%%
%uniformly in $k=O(n^{\frac{\alpha}{2\alpha-1}}\ell_1(n))$.
%\end{lemma}%
%%
Since the variance of a Bernoulli random variable with parameter $p$ is $p(1-p)$, we get
\begin{align}%
\mathbb E\Big[ \big( \mathds 1_{\{U_i\leq Y_n(i)\}} - Y_n(i)\big)^2\Big] = \mathbb E\Big[Y_n(i) - Y_n(i)^2\Big]\leq \mathbb E[Y_n(i)]\leq \frac{i}{n}.
\end{align}%
This implies that
\begin{equation}%
\sup_{i\leq \bar T n ^{\frac{\alpha}{2\alpha-1}}\ell_1(n)}\mathbb E\Big[ \big( \mathds 1_{\{U_i\leq Y_n(i)\}} - Y_n(i)\big)^2\Big] \leq \bar T n^{\frac{1-\alpha}{2\alpha-1}}\ell_1(n).
\end{equation}%
In particular,
\begin{equation}%
\sum_{i\leq \tau_n(\bar T)}\mathbb E\Big[ \big( \mathds 1_{\{U_i\leq Y_n(i)\}} - Y_n(i)\big)^2\Big] \leq \tau_n(\bar T) \bar T n^{\frac{1-\alpha}{2\alpha-1}}\ell_1^2(n)=\bar T
^2 n^{\frac{1}{2\alpha-1}} \ell_1^2(n) = o(n^{\frac{2}{2\alpha-1}}).
\end{equation}%

The second term in \eqref{eq:drift_upper_bound_computations} has been shown to converge in \eqref{eq:drift_lower_bound_computations} and \eqref{eq:functional_LLN_applied_to_arrival_process}. 

The third term can be bounded, using that $t\mapsto \tilde{R}_n(t)$ is non-decreasing, as 
\begin{align}%
\sup_{s\leq\tau_n(\bar T)}\Big\vert \sum_{i\leq \lfloor s \rfloor}\frac{\tilde R^{(\mathrm{up})} _n(i-1)}{n}\Big\vert &\leq \bar T n^{\frac{1-\alpha}{2\alpha-1}}\ell_1(n) \tilde R_n^{(\text{up})}(\tau_n(\bar T)).
\end{align}%
Note that $\bar T n^{\frac{1-\alpha}{2\alpha-1}}\ell_1(n)\rightarrow0$ as $n\rightarrow\infty$. Since  $n^{-1/(2\alpha-1)}\ell_2(n)\tilde R_n^{(\text{up})}(\tau_n(\bar T))\Pconv 0$ converges by Lemma \ref{lem:drift_upper_bound},
\begin{align*}%
n^{-1/(2\alpha-1)}\ell_2(n)\sup_{s\leq\tau_n(\bar T)}\Big\vert \sum_{i\leq \lfloor s \rfloor}&\frac{\tilde R^{(\mathrm{up})} _n(i-1)}{n}\Big\vert\leq (\bar T n^{\frac{1-\alpha}{2\alpha-1}}\ell_1(n))n^{-1/(2\alpha-1)}\ell_2(n)\tilde R_n^{(\text{up})}(\tau_n(\bar T)) \sr{\mathbb P}{\rightarrow} 0,
\end{align*}%
as $n\rightarrow\infty$. This concludes the proof of Lemma \ref{lem:drift_lower_bound}.
%again by exploiting a simple martingale decomposition. Note that
%%
%\begin{align}%
%\sup_{j\leq \bar Tn^{\frac{\alpha}{2\alpha-1}}\ell_1(n)}&\Big\vert \sum_{i\leq j}\frac{\sum_{l=1}^{i-1}(\mathds 1 _{\{ U_l \leq (l-1)/n \}} -\frac{l-1}{n})}{n}\Big\vert \nnl
%&\qquad\leq \bar T n^{\frac{1-\alpha}{2\alpha-1}}\ell_1(n)  \sup_{i\leq \bar Tn^{\frac{\alpha}{2\alpha-1}}\ell_1(n)}\Big\vert\sum_{l=1}^{i-1}\Big(\mathds 1 _{\{ U_l \leq (l-1)/n \}} -\frac{l-1}{n}\Big)\Big\vert,
%\end{align}%
%%
%which tends to zero in probability when rescaled by $n^{-1/(2\alpha-1)}\ell_2^{-1}(n)$, as in \eqref{eq:drift_lower_bound_martingale_sup}. Moreover,
%%
%\begin{align}%
%\sup_{j\leq \bar T n^{\frac{\alpha}{2\alpha-1}}\ell_1(n)}\Big\vert\sum_{i\leq j}\sum_{l=1}^{i-1}\frac{l}{n^2}\Big\vert = \sum_{i=1}^{\bar T n^{\frac{\alpha}{2\alpha-1}}\ell_1(n)}\sum_{l=1}^{i-1}\frac{l}{n^2} = O(n^{\frac{2-\alpha}{2\alpha-1}}\ell_1^3(n)).
%\end{align}%
%%
%Therefore,
%%
%\begin{equation}%
%n^{-1/(2\alpha-1)}\ell_2^{-1}(n)\sup_{j\leq \bar Tn^{\frac{\alpha}{2\alpha-1}}\ell_1(n)}\Big\vert\sum_{i\leq j}\sum_{l=1}^{i-1}\frac{l}{n^2}\Big\vert = o(1).
%\end{equation}%
%%
\end{proof}

\begin{proof}[Proof of Lemma \ref{lem:drift_convergence}] Since
\begin{align*}
\sup_{t\leq T} \vert n^{-\frac{1}{2\alpha-1}}\ell_2(n) R_n(tn^{\alpha/(2\alpha-1)}\ell_1(n)) - \frac{1}{2}t^2\vert &=\sup_{t\leq T} (n^{-\frac{1}{2\alpha-1}}\ell_2(n)R_n(tn^{\alpha/(2\alpha-1)}\ell_1(n)) - \frac{1}{2}t^2)^+ \nnl
&\quad+ \sup_{t\leq T}(n^{-\frac{1}{2\alpha-1}}\ell_2(n)R_n(tn^{\alpha/(2\alpha-1)}\ell_1(n)) - \frac{1}{2}t^2)^-,
\end{align*}%
we get
\begin{align}%
\sup_{t\leq T}\Big\vert n^{-\frac{1}{2\alpha-1}}\ell_2(n)R_n(tn^{\alpha/(2\alpha-1)}\ell_1(n)) - \frac{1}{2}t^2\Big\vert \leq  U_n(T) \vee L_n(T) 
\end{align}%
and both $U_n(T)$ and $L_n(T)$ converge in probability to zero by Lemmas \ref{lem:drift_upper_bound} and \ref{lem:drift_lower_bound}. This completes the proof of Lemma \ref{lem:drift_convergence}.
\end{proof}

\subsection{Proof of Theorem \ref{th:scaling_limit_queue_length}}\label{sec:proof_main_theorem_wrap_up}
For the final step, we prove that the cumulative busy time converges to the identity function.

\begin{lemma}[Cumulative idle time is negligible]%
As $n\rightarrow\infty$, 
\begin{equation}%
\hat{\mathbf{B}}_n(t)\sr{\mathrm d}{\rightarrow} \emph{id},\qquad \text{in}~(\mathcal D, U),
\end{equation}%
where $\emph{id}:\mathbb R^+\mapsto \mathbb R^+$ is the identity function.
\end{lemma}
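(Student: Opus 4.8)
\noindent\emph{Proof strategy.}
The plan is to reduce the claim to an $\OP(1)$ bound on the reflection term $\psi(\mathbf N_n)$ at a single time, exploiting that the time scale $\tau_n(1)=n^{\frac{\alpha}{2\alpha-1}}\ell_1(n)$ grows polynomially faster than the spatial scale $n^{\frac{1}{2\alpha-1}}\ell_2^{-1}(n)$. Writing $I_n(t)=t-B_n(t)$ for the cumulative idle time and using $\tau_n(t)=t\,\tau_n(1)$, one has $\hat{\mathbf B}_n(t)=t-I_n(\tau_n(t))/\tau_n(1)$; since $I_n\ge0$ and $t\mapsto I_n(\tau_n(t))$ is non-decreasing,
\[
\sup_{t\le T}\big\vert\hat{\mathbf B}_n(t)-t\big\vert=\frac{I_n(\tau_n(T))}{\tau_n(1)}.
\]
Comparing \eqref{eq:real_definition_queue_length} with \eqref{eq:definition_pre_reflection_process} and using $Q_n=\phi(N_n)$ from \eqref{eq:definition_queue_length} together with \eqref{eq:definition_reflection_mapping}, we obtain $\psi(N_n)(t)=\phi(N_n)(t)-N_n(t)=Q_n(t)-N_n(t)=(t-B_n(t))/\E[S]=I_n(t)/\E[S]$. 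As $\psi$ is positively homogeneous and commutes with the deterministic time change $t\mapsto t\,\tau_n(1)$, we have $\psi(N_n)(\tau_n(T))=n^{\frac{1}{2\alpha-1}}\ell_2^{-1}(n)\,\psi(\mathbf N_n)(T)$, so that, using $\ell_2=\ell_1^{-2}$,
\[
\sup_{t\le T}\big\vert\hat{\mathbf B}_n(t)-t\big\vert=\E[S]\,\frac{n^{\frac{1}{2\alpha-1}}\ell_2^{-1}(n)}{\tau_n(1)}\,\psi(\mathbf N_n)(T)=\E[S]\,n^{\frac{1-\alpha}{2\alpha-1}}\ell_1(n)\,\psi(\mathbf N_n)(T).
\]
Since $\alpha>1$, the deterministic prefactor $n^{\frac{1-\alpha}{2\alpha-1}}\ell_1(n)\to0$, so it suffices to show $\psi(\mathbf N_n)(T)=\OP(1)$ for each fixed $T>0$.

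The obstacle is the circular dependence of $\mathbf N_n$ on $\hat{\mathbf B}_n$, which enters the decomposition \eqref{eq:pre_reflection_queue_split} through the time-changed term $\bm{\sigma}_n(\hat{\mathbf B}_n(\cdot))$; this is precisely why the reflection cannot yet be handled by the Continuous-Mapping Theorem. I would break the circularity with a crude a priori bound that removes the time change. Since $0\le\hat{\mathbf B}_n(s)\le s\le T$ for all $s\le T$ and $\mathbf R_n\ge0$, \eqref{eq:pre_reflection_queue_split} yields, for all $s\le T$,
\[
-\mathbf N_n(s)\le -q_0+\sup_{u\le T}\vert\mathbf \Pi_n(u)\vert+\sup_{u\le T}\vert\bm{\sigma}_n(u)\vert+\sup_{u\le T}\mathbf R_n(u),
\]
so that $\psi(\mathbf N_n)(T)=\big(\sup_{s\le T}(-\mathbf N_n(s))\big)^{+}$ is dominated by the positive part of the right-hand side. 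Here $\sup_{u\le T}\vert\mathbf \Pi_n(u)\vert\Pconv0$ by Lemma \ref{lem:poisson_term_is_negligible}, $\sup_{u\le T}\mathbf R_n(u)\Pconv\lambda^2T^2/2$ by Lemma \ref{lem:drift_convergence}, and $\sup_{u\le T}\vert\bm{\sigma}_n(u)\vert=\OP(1)$ since $(\bm{\sigma}_n)_n$ is tight in $(\mathcal D([0,T]),M_1)$ by Lemma \ref{lem:stable_limit} and $M_1$-compact sets are uniformly bounded. Hence $\psi(\mathbf N_n)(T)=\OP(1)$.

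Combining the two steps, $\sup_{t\le T}\vert\hat{\mathbf B}_n(t)-t\vert=\E[S]\,n^{\frac{1-\alpha}{2\alpha-1}}\ell_1(n)\cdot\OP(1)\Pconv0$ for every $T>0$, and since $T$ is arbitrary this is $\hat{\mathbf B}_n(\cdot)\Pconv\mathrm{id}$ in $(\mathcal D([0,\infty)),U)$; in particular $\hat{\mathbf B}_n(\cdot)\dconv\mathrm{id}$ in $(\mathcal D,U)$. I expect the step requiring most care to be the a priori bound on $\psi(\mathbf N_n)(T)$: one must check that the bound on $\mathbf N_n$ genuinely avoids $\hat{\mathbf B}_n$ --- which works only because $\hat{\mathbf B}_n(s)\in[0,s]$ --- and control $\sup_{u\le T}\vert\bm{\sigma}_n(u)\vert$ without pushing the supremum functional through the $M_1$ limit; tightness is enough for this, though one could instead invoke continuity of the supremum functional in the $M_1$ topology \cite{StochasticProcess}, valid here because $\mathcal S$ a.s.\ has no jump at the fixed time $T$.
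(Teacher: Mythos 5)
Your proof is correct, but it follows a genuinely different route from the paper's. The paper works at the fluid scale with the net-input process $X_n(t)=\sum_{i=1}^{A_n(t)}S_i-t$: it shows $\sup_{t\leq T}\vert X_n(\tau_n(t))\vert/\tau_n(1)\Pconv 0$ by combining the ordinary LLN for $\tfrac1m\sum_{i\leq m}S_i$ (which only needs $\E[S]<\infty$) with the fluid limit $A_n(\tau_n(t))/\tau_n(1)\to\lambda$, and then passes to $I_n$ via continuity of the infimum map. You instead exploit the exact Skorokhod identity $I_n(t)=\E[S]\,\psi(N_n)(t)$, the positive homogeneity of $\psi$ under the space--time rescaling, and the scale separation $n^{\frac{1}{2\alpha-1}}\ell_2^{-1}(n)\ll\tau_n(1)$, reducing everything to an a priori $\OP(1)$ bound on $\psi(\mathbf N_n)(T)$; your handling of the circularity through the crude bound $\hat{\mathbf B}_n(s)\in[0,s]$ and $M_1$-tightness of $\bm\sigma_n$ is sound (compact $M_1$-sets are uniformly bounded, so no continuity of the supremum functional is needed). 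Your argument buys a quantitatively sharper conclusion --- the idle time satisfies $I_n(\tau_n(T))=\OP\big(n^{\frac{1}{2\alpha-1}}\ell_2^{-1}(n)\big)$, i.e.\ it lives at the spatial fluctuation scale rather than merely being $o(\tau_n(1))$ --- and makes transparent that the lemma is essentially free once Lemmas \ref{lem:poisson_term_is_negligible}, \ref{lem:stable_limit} and \ref{lem:drift_convergence} are available; the paper's argument is softer and avoids the reflection-map bookkeeping, though both ultimately rely on Lemma \ref{lem:drift_convergence} to control the repeated-marks term.
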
%
\begin{proof}%
Since $B_n(t) = t - I_n(t)$, we can equivalently prove that $I_n(t)= \inf_{0\leq s\leq t}(X_n(s)^-)$ converges uniformly to zero, where $X_n(t)$ is the net-input process defined in \eqref{eq:net_input_process}. By continuity of the map $\psi$ given by $\psi:f(\cdot)\rightarrow \inf_{0\leq s\leq \cdot}(f(s)^-)$, it is sufficient to prove that $X_n(\cdot)$ converges uniformly to zero, when appropriately rescaled. By manipulating \eqref{eq:net_input_process} we immediately get
\begin{align}%
\frac{1}{\tau_n(1)}&\sup_{t\leq T}\vert X_n(\tau_n(t))\vert = \sup_{t\leq T}\Big\vert \frac{A_n(\tau_n(t))}{\tau_n(1)}\frac{1}{A_n(\tau_n(t))}\sum_{i=1}^{A_n(\tau_n(t))}S_i-1\Big\vert \nnl
&\leq \sup_{t\leq T}\Big\vert \frac{A_n(\tau_n(t))}{\tau_n(1)}-\frac{1}{\E[S]}\Big\vert\frac{1}{A_n(\tau_n(t))}\sum_{i=1}^{A_n(\tau_n(t))}S_i  + \sup_{t\leq T}\Big\vert\frac{1}{\E[S]}\frac{1}{A_n(\tau_n(t))}\sum_{i=1}^{A_n(\tau_n(t))}S_i-1\Big\vert .
\end{align}%
Note that $\tau_n(t)\rightarrow\infty$ and $A_n(\tau_n(t))\Pconv\infty$ as $n\rightarrow\infty$. Then the second term converges to zero in probability by the Law of Large Numbers (LLN) and the first one converges to zero by the  LLN for the Poisson process. Indeed, $A_n(\tau_n(t)) = \Pi(\tau_n(t)) - R_n(\tau_n(t))$, so that 
\begin{align}\label{eq:idle_time_is_negligible_middle}%
\sup_{t\leq T}\Big\vert \frac{A_n(\tau_n(t))}{\tau_n(1)}-\frac{1}{\E[S]}\Big\vert &\leq \sup_{t\leq T}\Big\vert \frac{\Pi(\tau_n(t))}{\tau_n(1)}-\frac{1}{\E[S]}\Big\vert + \frac{1}{\tau_n(1)}\sup_{t\leq T}\vert R_n(\tau_n(t))\vert\\
&= \sup_{t\leq T}\Big\vert \frac{\Pi(\tau_n(t))}{\tau_n(1)}-\frac{1}{\E[S]}\Big\vert + \frac{n^{1/(2\alpha-1)}\ell_n^{-1}}{\tau_n(1)}\sup_{t\leq T}\frac{\vert R_n(\tau_n(t))\vert}{n^{1/(2\alpha-1)}\ell_n^{-1}}.\notag
\end{align}%
As shown above in Lemma \ref{lem:drift_convergence}, $n^{-1/(2\alpha-1)}\ell_2(n) R_n(\tau_n(t))$ converges uniformly to $-\lambda^2/2 t^2$, and since $n^{1/(2\alpha-1)}\ell_n^{-1}/ \tau_n(1)\rightarrow 0$, the second term in \eqref{eq:idle_time_is_negligible_middle} is negligible. By the heavy-traffic assumption \eqref{eq:definition_criticality_condition} and the LLN for the Poisson process the first term also converges to zero.
\end{proof}%

We now conclude the proof of Theorem \ref{th:scaling_limit_queue_length} by collecting various results from the previous sections.

First, split the process $\mathbf N_n(\cdot)$ in its martingale and drift components as in \eqref{eq:pre_reflection_queue_split} to get
\begin{align}%
&\mathbf N_n(t) =  q_0 + \mathbf \Pi_n (t) + \bm \sigma_n(\hat{\mathbf B}_n(t)) - \mathbf R_n(t).
\end{align}%
Since $\mathbf \Pi_n(\cdot)$ and $\bm\sigma_n(\cdot)$ are independent, and $\hat{\mathbf B}_n(\cdot)$ and $\mathbf R_n(\cdot)$ converge to deterministic limits in $\mathcal D$, we have
\begin{equation}%
(\mathbf \Pi_n(\cdot), \bm\sigma_n(\cdot), \hat{\mathbf B}_n(\cdot), \mathbf R_n(\cdot)) \sr{\mathrm d}{\rightarrow} (0, s_{\alpha}\mathcal S(\cdot) , \mathrm{id}, \lambda^2/2 t^2),\qquad \text{in}~(\mathcal D^4, M^{\mathrm{W}}_1).
\end{equation}%
This, together with a time-change theorem for processes with discontinuous sample paths (e.g. \cite[Theorem 13.2.3]{StochasticProcess}) implies
\begin{equation}\label{eq:joint_convergence_processes}%
(\mathbf \Pi_n(\cdot), \bm\sigma_n(\hat{\mathbf B}_n(\cdot)), \mathbf R_n(\cdot)) \sr{\mathrm d}{\rightarrow} (0, s_{\alpha}\mathcal S(\cdot), \lambda^2/2 t^2),\qquad \mathrm{in}~(\mathcal D^3, M^{\mathrm{W}}_1).
\end{equation}%
Note that \cite[Theorem 13.2.3]{StochasticProcess} does not hold in general in the finer $J_1$ topology. Since the three limit processes in \eqref{eq:joint_convergence_processes} do not have common discontinuity points, addition is continuous in $(0, 1/\E[S]^{(\alpha+{1})/\alpha}\mathcal S(\cdot), \lambda^2/2 t^2)$ in the $M_1$ topology, so that
\begin{equation}%
\mathbf N_n(t) \sr{\mathrm d}{\rightarrow} q_0 + s_{\alpha}\mathcal S(\cdot) - \frac{\lambda^2}{2} t^2,\qquad \text{in}~(\mathcal D,M_1).
\end{equation}%
The second claim \eqref{eq:scaling_limit_queue_length} follows immediately from the Continuous Mapping Theorem, since the reflection map is Lipschitz continuous in the $M_1$ topology by \cite[Theorem 13.5.1]{StochasticProcess}.
\qed
\section{Discussion}\label{sec:discussion}
We have considered a queueing model in which only a finite number of customers can potentially join the system, also referred to as the  $\DG$ model (see \cite{honnappa2014transitory}). For this model, we have defined a suitable heavy-traffic condition, in which the instantaneous arrival rate is assumed to be equal to the service rate. We have shown that, under the additional assumption that the service times obey a power-law with parameter $\alpha\in(1,2)$, the queue length process converges to an $\alpha$-stable process with negative parabolic drift. To prove this, we have given a novel definition of the arrival process that enabled us to obtain explicit bounds on the limiting drift. 
There is a connection between the queueing model and random graphs. Indeed, we can associate to the queueing process a (rooted) random forest, as follows. Customers are the vertices, the first customer in the system is the root, and when customer $i$ joins the queue during the service of customer $j$, an edge is placed between $i$ and $j$. The queue length process then corresponds to the \emph{exploration} of the random tree constructed as above. The exploration process encodes useful information on the underlying random graph. For example, excursions above past minima are the sizes of the connected components. Therefore, our result should be compared with analogous results for other random graph models \cite{BhaHofLee09b, Jose10}. Surprisingly, in the queueing setting the limiting process is much simpler and more intuitive.  Using continuity arguments, we have proved that this implies that the length of the first busy period converges in distribution to the first excursion of the stable motion with negative drift. 

In this paper we have focused on heavy-tailed service times. Thus, Theorem \ref{th:scaling_limit_queue_length} should be compared with the finite-variance case, where the rescaled queue length process converges to a (reflected) Brownian motion with parabolic drift. {This process has been studied by several authors. In \cite{groeneboom1989brownian,groeneboom2010maximum, janson2010maximum} analytic expressions are derived for the joint density of the maximum and location of the maximum of the process $W(t) = B(t) - ct^2$, where $B(\cdot)$ is a Brownian motion and $c>0$ is a constant, and tail estimates are derived in \cite{hofstad2010critical}. In \cite{Aldo97} it is shown that the length of the excursions of $W(t)$ above its past minima can be ordered. 
%
% and in \cite{groeneboom2010maximum} he carried the analysis further by providing simpler formulas. In \cite{janson2010maximum} the authors also derive simple formulas (yet different from the ones appearing in \cite{groeneboom2010maximum}) for the distribution of the maximum and for its moments. The connection between the two approaches \cite{groeneboom2010maximum, janson2010maximum} is further clarified in \cite{groeneboom2011tail}, where by building on \cite{groeneboom1989brownian} the authors give an asymptotic expansion for the tail of the maximum of $W(t)$. Finally, in \cite{Aldo97} it is shown that the length of the excursions of $W(t)$ above its past minima can be ordered. 

On the other hand,  very little is known about the (reflected) $\alpha$-stable motion with negative quadratic drift. In particular, there are no explicit formulas for the maximum of the free process, and it is not known whether the excursions above past minima can be ordered.} A striking property of the limiting process that we obtain is that  $\sup_{t\geq0} \phi(a+b\mathcal S(t) - ct^2) = \infty$ almost surely. Indeed, it is well known that for any L\'evy process $X(\cdot)$ with unbounded L\'evy measure
\begin{equation}%
\mathbb P (\forall N\in\mathbb N~\forall T>0~\exists t\geq T: \Delta X(t) \geq N) = 1,
\end{equation}%
where $\Delta X(t) := X(t) - \lim_{s\rightarrow t^-}X(s)$. However, due to the parabolic drift the excursions of $\phi(\mathcal N)(\cdot)$ containing a large jump  become smaller as time passes. This suggests that the excursions of $\phi(\mathcal N)(\cdot)$ can be ordered by their time duration and the largest one is finite. In particular it should be possible to prove analytically that for $q_0>0$ large enough the probability that the first busy period is (one of) the largest is close to one. This presents an interesting direction for future research.

\section*{Acknowledgments}

This work is supported by the NWO Gravitation {\sc Networks} grant 024.002.003. The work of RvdH is further supported by the NWO VICI grant 639.033.806. The work of JvL is further supported by an NWO TOP-GO grant and by an ERC Starting Grant.

\bibliographystyle{abbrv}

\DeclareRobustCommand{\VAN}[3]{#3}

\bibliography{library}%{D:/Dropbox/Dropbox/Brownian_scaling_limits/Common_files/library}

\begin{thebibliography}{10}

\bibitem{Aldo97}
D.~Aldous.
\newblock {Brownian excursions, critical random graphs and the multiplicative
  coalescent}.
\newblock {\em The Annals of Probability}, 25(2):812--854, 1997.

\bibitem{asmussen2003applied}
S.~Asmussen.
\newblock {\em {Applied Probability and Queues}}.
\newblock Springer Science {\&} Business Media, 2003.

\bibitem{bet2014heavy}
G.~Bet, R.~van~der Hofstad, and J.~S.~H. van Leeuwaarden.
\newblock {Heavy-traffic analysis through uniform acceleration of queues with
  diminishing populations}.
\newblock {\em arXiv preprint arXiv:1412.5329}, 2014.

\bibitem{BhaHofLee09b}
S.~Bhamidi, R.~van~der Hofstad, and J.~S.~H. van Leeuwaarden.
\newblock {Novel scaling limits for critical inhomogeneous random graphs}.
\newblock {\em The Annals of Probability}, 40(6):2299--2361, 2012.

\bibitem{billingsley1999convergence}
P.~Billingsley.
\newblock {\em {Convergence of Probability Measures}}.
\newblock John Wiley {\&} Sons, 1999.

\bibitem{bingham1989regular}
N.~Bingham, C.~Goldie, and J.~Teugels.
\newblock {\em {Regular Variation}}.
\newblock Cambridge University Press, 1989.

\bibitem{david2003order}
H.~A. David and H.~N. Nagaraja.
\newblock {\em {Order Statistics}}.
\newblock NJ: John Wiley {\&} Sons, 2003.

\bibitem{groeneboom1989brownian}
P.~Groeneboom.
\newblock {Brownian motion with a parabolic drift and Airy functions}.
\newblock {\em Probability Theory and Related Fields}, 81:79--109, 1989.

\bibitem{groeneboom2010maximum}
P.~Groeneboom.
\newblock {The maximum of Brownian motion minus a parabola}.
\newblock {\em Electronic Journal of Probability}, 15:1930--1937, 2010.

\bibitem{hofstad2010critical}
R.~{\VAN{Hofstad}{Van der}{van der}}~Hofstad, A.~J. E.~M. Janssen, and J.~S.~H.
  van Leeuwaarden.
\newblock {Critical epidemics, random graphs, and Brownian motion with a
  parabolic drift}.
\newblock {\em Advances in Applied Probability}, 42(4):1187--1206, 2010.

\bibitem{honnappa2015delta}
H.~Honnappa, R.~Jain, and A.~R. Ward.
\newblock {A queueing model with independent arrivals, and its fluid and
  diffusion limits}.
\newblock {\em Queueing Systems}, to appear.

\bibitem{honnappa2014transitory}
H.~Honnappa and A.~R. Ward.
\newblock {On transitory queueing}.
\newblock {\em arXiv preprint arXiv:1412.2321}, 2014.

\bibitem{iglehart1970multiple}
D.~L. Iglehart and W.~Whitt.
\newblock {Multiple Channel Queues in Heavy Traffic. I}.
\newblock {\em Advances in Applied Probability}, 2(2):355--369, 1970.

\bibitem{jacod2003limit}
J.~Jacod and A.~Shiryaev.
\newblock {\em {Limit Theorems for Stochastic Processes}}.
\newblock Springer Science {\&} Business Media, 2003.

\bibitem{janson2010maximum}
S.~Janson, G.~Louchard, and A.~Martin-L\"of.
\newblock {The maximum of Brownian motion with parabolic drift}.
\newblock {\em Electronic Journal of Probability}, 15:1893--1929, 2010.

\bibitem{Jose10}
A.~Joseph.
\newblock {The component sizes of a critical random graph with a given degree
  sequence}.
\newblock {\em The Annals of Applied Probability}, 24(6):2560--2594, 2014.

\bibitem{klenke2008probability}
A.~Klenke.
\newblock {\em {Probability Theory: A Comprehensive Course}}.
\newblock Springer Science {\&} Business Media, 2008.

\bibitem{louchard1994large}
G.~Louchard.
\newblock {Large finite population queueing systems. The single-server model}.
\newblock {\em Stochastic Processes and their Applications}, 53:117--145, 1994.

\bibitem{mandelbaum1995strong}
A.~Mandelbaum and W.~A. Massey.
\newblock {Strong approximations for time-dependent queues}.
\newblock {\em Mathematics of Operations Research}, 20(1):33--64, 1995.

\bibitem{massey1982non}
W.~A. Massey.
\newblock {\em {Non-Stationary Queues}}.
\newblock PhD thesis, 1982.

\bibitem{massey1985asymptotic}
W.~A. Massey.
\newblock {Asymptotic analysis of the time dependent M/M/1 queue}, 1985.

\bibitem{newell1968queuesIII}
G.~F. Newell.
\newblock {Queues with time-dependent arrival rates III: A mild rush hour}.
\newblock {\em Journal of Applied Probability}, 5(3):591--606, 1968.

\bibitem{skorokhod1956limit}
A.~V. Skorokhod.
\newblock {Limit Theorems for Stochastic Processes}.
\newblock {\em Theory of Probability and its Applications}, I(3):261--290,
  1956.

\bibitem{StochasticProcess}
W.~Whitt.
\newblock {\em {Stochastic-Process Limits — An Introduction to
  Stochastic-Process Limits and Their Application to Queues}}.
\newblock Springer Science {\&} Business Media, 2002.

\end{thebibliography}
\end{document}